\newtheorem{proposition}{Proposition}
\newtheorem{lemma}{Lemma}
\newtheorem{corollary}{Corollary}
\newtheorem{theorem}{Theorem}
\newenvironment{hangref}{\begin{list}{}
  {\setlength{\itemsep}{4pt}
  \setlength{\parsep}{0pt} \setlength{\leftmargin}{+\parindent}
  \setlength{\itemindent}{-\parindent}}
  }{\end{list}}
\begin{document}

\begin{center}
{\LARGE A Novel Mathematical Model for \\ the Unique Shortest Path Routing Problem \footnote{Some results in this paper have been presented on the 4th
International Conference on Networking (ICN 2005), Reunion Island, April 2005, INFORMS Annual Meeting 2006, Pittsburgh, PA, USA, November 2006, and the 6th
International Congress on Industrial and Applied Mathematics (ICIAM 2007), Zurich, Switzerland, July, 2007, respectively.}} \\ [12pt]

\mbox{\large Changyong Zhang}  \\

{\footnotesize Department of Mathematics, University of Southern California \\ 3620 South Vermont Avenue, KAP 108, Los Angeles, CA 90089-2532, USA,
changyoz@usc.edu} \\ [6pt]

\end{center}

\baselineskip 20pt plus .3pt minus .1pt

\noindent Link weights are the principal parameters of shortest path routing protocols, the most commonly used protocols for IP networks. The problem of optimally
setting link weights for unique shortest path routing is addressed. Due to the complexity of the constraints involved, there exist challenges to formulate the
problem properly, so that a solution algorithm may be developed which could prove to be more efficient than those already in existence. In this paper, a novel
complete formulation with a polynomial number of constraints is first introduced and then mathematically proved to be correct. It is further illustrated that the
formulation has advantages over a prior one in terms of both constraint structure and model size for a proposed decomposition method to solve the problem.

\noindent {\it Key words:} Mathematical Modeling, Model Verification, Constraint Structure, Model Size, Decomposition, Unique Path, Shortest Path Routing, Link
Weights

\section{Introduction}

Shortest path routing protocols such as OSPF (Moy, 1998) are the most widely deployed and commonly used protocols for IP networks. In shortest path routing, each
link is assigned a weight and traffic demands are routed along the shortest paths with respect to link weights, based on a shortest path first algorithm
(Bertsekas and Gallager, 1992). Link weights are hence the principal parameters and an essential problem is to find an appropriate weight set for shortest path
routing.

A simple approach to set link weights is the hop-count method, assigning the weight of each link to one. The length of a path is therefore the number of hops.
Another default approach recommended by Cisco is the inv-cap method, setting the weight of a link inversely proportional to its capacity, without taking traffic
conditions into consideration. More generally, the weight of a link may depend on and be related to its transmission capacity and its traffic load. Accordingly, a
problem worth investigating is the task of finding an optimal weight set for shortest path routing, given a network topology, a projected traffic matrix (Feldmann
et al., 2001), and an objective function.

The problem has two instances, depending on whether multiple shortest paths or only a unique routing path from an origin node to a destination node is allowed.
For the first instance, a number of heuristic methods have been introduced, based on genetic algorithms (Ericsson et al., 2002) and local search methods (Fortz
and Thorup, 2000). For the second instance, the Lagrangian relaxation method (Lin and Wang, 1993) and a local search method (Ramakrishnan and Rodrigues, 2001)
have been proposed. These methods have also resulted in good routing performances by being tested in particular data sets. On the other hand, with these heuristic
methods, the problem is not completely formulated and so in general is not optimally solved. It would be desirable if in average cases optimal solutions could be
obtained for reasonably large data instances from real-world applications.

From a management point of view, unique-path routing uses much simpler routing mechanisms and allows for easier monitoring of traffic flows (Ameur and Gourdin,
2003). Therefore, this paper focuses on the unique-path instance. The problem is referred to as the \textit{unique shortest path routing problem}. It is a
reduction of the integer multi-commodity flow problem (Ahuja et al., 1993).

The problem has been well studied and efforts have been made to formulate the problem mathematically (Ameur and Gourdin, 2003; Zhang and Rodo\v sek, 2005).
Mathematical models have also been developed for other related problems such as network design and routing problems (Bley and Koch, 2002; Holmberg and Yuan,
2004). Most of these models have formulated the corresponding problems completely, whereas they have been either demand-based or path-based and have left space to
further explore the structure properties of the problems, from which more efficient solution methods may be derived.

The main goal of this paper is to mathematically model the problem, which would yield a new exact solution approach for real-world applications in average data
instances. In Section 2, the problem definition is first specified. Two different complete formulations, a new one and an existing one for comparison, are
introduced in Section 3. The new formulation is then mathematically proved to be correct in Section 4. Differences between the two formulations on both constraint
structure and model size are discussed in Section 5. Conclusions are drawn in Section 6.

\section{Problem Definition}

The unique shortest path routing problem is specified as follows: \\
Given
\begin{itemize}
\item A network topology, which is a directed graph structure $\mathcal{G} =(\mathcal{N}, \mathcal{L})$, where
\begin{itemize}
\item $\mathcal{N}$ is a finite set of nodes, each of which represents a router; and
\item $\mathcal{L}$ is a set of directed links, each of which corresponds to a transmission link; (For each $(i, j) \in \mathcal{L}$, $i$ is the starting node,
$j$ is the ending node, and $c_{ij} \ge 0$ is the link capacity.)
\end{itemize}
\item A traffic matrix, which is a set of demands $\mathcal{D}$; (It is assumed that there is at most one demand between each origin-destination pair. For each
demand $k \in \mathcal{D}$, $s_k \in \mathcal{N}$ is the origin node, $t_k \in \mathcal{N}$ is the destination node, and $d_k > 0$ is the required bandwidth.
Accordingly, $\mathcal{S}$ is the set of all origin nodes, $\mathcal{T}_s$ is the set of all destination nodes of demands originating from node $s \in
\mathcal{S}$, and $\mathcal{D}_s$ is the set of all demands originating from node $s \in \mathcal{S}$.)
\item Lower and upper bounds of link weights, which are positive real numbers $w_{\mathrm{min}}$ and $w_{\mathrm{max}}$, respectively; and
\item An objective function, specifically, to maximize the sum of residual capacities,
\end{itemize}
Find an optimal weight set $w_{ij}, (i, j)\in \mathcal{L}$, subject to
\begin{itemize}
\item Flow conservation constraints: for each demand, at each node, the sum of all incoming flows (including the demand bandwidth at the origin node) is equal to
the sum of all outgoing flows (including the demand bandwidth at the destination node);
\item Link capacity constraints: for each link, the load of traffic flows transiting the link does not exceed the capacity of the link;
\item Path uniqueness constraints: each demand has a unique routing path; and
\item Path length constraints: for each demand, the length of each path assigned to route the demand is strictly less than that of any other possible and
unassigned path to route the demand.
\end{itemize}

\section{Problem Formulation}

In this section, the problem is mathematically formulated from two different perspectives, based on the study of the problem properties. For comparison, an
existing model (Zhang and Rodo\v sek, 2005) is introduced in more detail first.

\subsection{A Demand-Based Model} \label{sec:dbm}

According to the characteristics of unique shortest path routing, the routing path of a demand is the shortest among all possible paths. For each link, the
routing path of a demand either traverses the link or not.

Figure \ref{fig:d:PathLength} illustrates the relationships between the lengths of the shortest paths and link weights. Paths in thick lines are routing paths.
The path length and path uniqueness constraints require that the length of the unique shortest path to route a demand is less than that of any other possible path
from the origin node to the destination node.

\begin{figure}
\begin{center}
\includegraphics [width = 0.88 \textwidth] {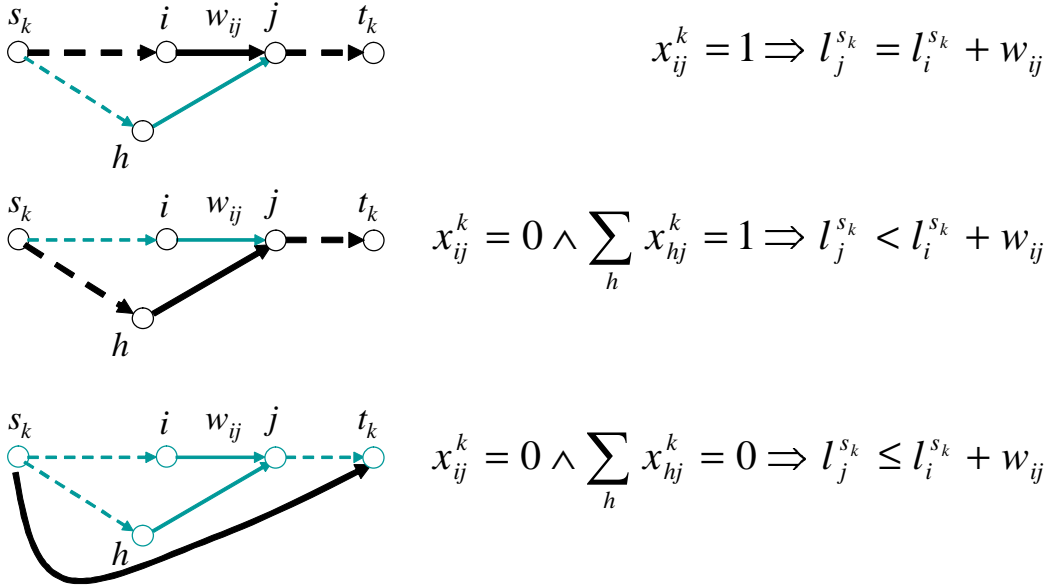}
\caption{Illustration of the path length constraints}
\label{fig:d:PathLength}
\end{center}
\end{figure}

With respect to the constraints, there are three scenarios to be considered:

\begin{itemize}
\item If the routing path of demand $k$ traverses link $(i, j)$, the length of the shortest path from node $s_k$ to node $j$ is the length of the shortest path
from node $s_k$ to node $i$ plus the weight of link $(i, j)$;
\item If the routing path of demand $k$ does not traverse link $(i, j)$ but visits node $j$, the length of the shortest path from node $s_k$ to node $j$ is
strictly less than the sum of the length of the shortest path from node $s_k$ to node $i$ and the weight of link $(i, j)$; (Otherwise, there would be at least two
shortest paths to route demand $k$.)
\item If the routing path of demand $k$ neither traverses link $(i, j)$ nor visits node $j$, the length of the shortest path from node $s_k$ to node $j$ is less
than or equal to the sum of the length of the shortest path from node $s_k$ to node $i$ and the weight of link $(i, j)$.
\end{itemize}

Based on the above observations on the relationships between the length of a shortest path and the weights of links that it traverses, the problem can be
mathematically formulated as a \textit{demand-based} model (DBM) as follows (by defining one routing decision variable for each link-demand pair):

\textit{Routing decision variables}:
\begin{eqnarray} \label{d:rdvar}
x^k_{ij} \in \{0,1\}, \forall k \in \mathcal{D}, \forall (i,j) \in \mathcal{L}
\end{eqnarray}
is equal to $1$ if and only if the routing path of demand $k$ traverses link $(i, j)$. The number of this set of variables is $|\mathcal{D}||\mathcal{L}|$.

\textit{Link weight variables}:
\begin{eqnarray} \label{d:lwvar}
w_{ij} \in [w_{\mathrm{min}}, w_{\mathrm{max}}], \forall (i, j) \in \mathcal{L}
\end{eqnarray}
represents the routing cost of link $(i, j)$. The number of this set of variables is $|\mathcal{L}|$.

\textit{Path length variables}:
\begin{eqnarray} \label{d:plvar}
l^s_i \in [0, +\infty), \forall s \in \mathcal{S}, \forall i \in \mathcal{N}
\end{eqnarray}
denotes the length of the shortest path from origin node $s$ to node $i$. Obviously, $l^{s_k}_{t_k}$ is the length of the shortest path to route demand $k \in
\mathcal{D}$ and $l^s_s = 0$, $\forall s \in \mathcal{S}$. The number of this set of variables is $|\mathcal{S}||\mathcal{N}|$.

\textit{Flow conservation constraints}:
\begin{eqnarray} \label{d:fccon}
\sum_{h: (h, i) \in \mathcal{L}}x^k_{hi} - \sum_{j: (i, j) \in \mathcal{L}}x^k_{ij} = \left \{\begin{array}{rl} -1, & \textrm{if} \ i = s_k \\ 1, & \textrm{if} \
i = t_k \\ 0, & \textrm{otherwise} \end{array} \right., \forall k \in \mathcal{D}, \forall i \in \mathcal{N}
\end{eqnarray}
The number of this set of constraints is $|\mathcal{D}||\mathcal{N}|$.

\textit{Link capacity constraints}:
\begin{eqnarray} \label{d:lccon}
\sum_{k \in \mathcal{D}}d_k x^k_{ij} \leq c_{ij}, \forall (i, j) \in \mathcal{L}
\end{eqnarray}
The number of this set of constraints is $|\mathcal{L}|$.

\textit{Path uniqueness constraints}: under the combined restrictions of the flow conservation constraints and the path length constraints, the constraints are
satisfied automatically.

\textit{Path length constraints}:
\begin{eqnarray} \label{d:plcon}
\left. \begin{array}{l} \displaystyle l^{s_k}_j \le l^{s_k}_i + w_{ij} - \varepsilon \left(\sum_{h: (h, j) \in \mathcal{L}}x^k_{hj} - x^k_{ij} \right) \\
\displaystyle l^{s_k}_j \ge l^{s_k}_i + w_{ij} - M\left(1 - x^k_{ij}\right) \end{array} \right\}, \forall k \in \mathcal{D}, \forall (i, j) \in \mathcal{L}
\end{eqnarray}
where $\varepsilon$ and $M$ are appropriate constants with $0 < \varepsilon \ll M$. The number of this set of constraints is $2|\mathcal{D}||\mathcal{L}|$. By
enumerating all possible values of the routing decision variables $x^k_{ij}, k \in \mathcal{D}, (i, j) \in \mathcal{L}$, it can be verified that the linearized
constraints (\ref{d:plcon}) are identical to those constraints (\ref{d:plcon0}) originally presented in logic forms, as illustrated in Figure
\ref{fig:d:PathLength}.
\begin{eqnarray} \label{d:plcon0}
\left. \begin{array}{r} \displaystyle x^k_{ij} = 0 \wedge \sum_{h: (h, j) \in \mathcal{L}}x^k_{hj} = 0 \Rightarrow l^{s_k}_j \le l^{s_k}_i + w_{ij} \\
\displaystyle x^k_{ij} = 0 \wedge \sum_{h: (h, j) \in \mathcal{L}}x^k_{hj} = 1 \Rightarrow l^{s_k}_j < l^{s_k}_i + w_{ij} \\ \displaystyle x^k_{ij} = 1
\Rightarrow l^{s_k}_j = l^{s_k}_i + w_{ij} \end{array} \right \}, \forall k \in \mathcal{D}, \forall (i, j) \in \mathcal{L}
\end{eqnarray}

\textit{Objective function}:
\begin{eqnarray*}
\max \sum_{(i, j) \in \mathcal{L}}\left(c_{ij} - \sum_{k \in \mathcal{D}}d_k x^k_{ij}\right)
\end{eqnarray*}
which is equivalent to
\begin{eqnarray} \label{d:obj}
\mbox{min} \sum_{(i, j) \in \mathcal{L}}\sum_{k \in \mathcal{D}} d_k x^k_{ij}
\end{eqnarray}

Accordingly, the resulting complete model is presented as

\begin{center}
\textbf{DBM:} $\begin{array}{l} Optimize \ \ (\ref{d:obj}) \\ Subject \ to \ (\ref{d:fccon}), (\ref{d:lccon}), (\ref{d:plcon}), (\ref{d:rdvar}), (\ref{d:lwvar}),
(\ref{d:plvar}) \end{array}$
\end{center}

In the following, an equivalent model to DBM, which will be used to verify the correctness of DBM in Section \ref{sec:vrf}, is derived.

A necessary condition of the unique shortest path routing problem is the \textit{sub-path optimality} requirement, which states that any sub-path of a routing
path is still a unique shortest path (Bley and Koch, 2002). Specifically, given an origin node $s \in \mathcal{S}$ and a node $i \in \mathcal{N}$ where $i \ne s$,
it requires that all demands originating from $s$ and visiting $i$ use the same incoming link to $i$. Mathematically, it is formulated as
\begin{eqnarray} \label{d:spcon}
\sum_{h:(h, i) \in \mathcal{L}} \max_{k \in \mathcal{D}_s} x_{hi}^k \le 1, i \ne s, \forall s \in \mathcal{S}, \forall i \in \mathcal{N}.
\end{eqnarray}

The above constraints can be alternatively presented with linear constraints, by introducing a new variable $y_{ij}^s, s \in \mathcal{S}, (i, j) \in \mathcal{L}$:
\begin{eqnarray*}
y_{ij}^s \ge x_{ij}^k, \forall k \in \mathcal{D}_s, \forall (i, j) \in \mathcal{L} \quad \mathrm{and} \quad \sum_{h:(h, i) \in \mathcal{L}} y_{hi}^s \le 1, i \ne
s, \forall s \in \mathcal{S}, \forall i \in \mathcal{N}.
\end{eqnarray*}

In DBM, the sub-path optimality constraints are not explicitly included. In the following, it is proved that the constraints are implied by the path length
constraints.

\begin{proposition} \label{prop:d:spcon}
The path length constraints in DBM imply the sub-path optimality constraints.
\end{proposition}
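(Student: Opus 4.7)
The plan is to argue by contradiction. Suppose the sub-path optimality constraint (\ref{d:spcon}) fails at some origin $s \in \mathcal{S}$ and some node $i \neq s$. Then there must exist two distinct incoming links $(h_1, i), (h_2, i) \in \mathcal{L}$ together with demands $k_1, k_2 \in \mathcal{D}_s$ such that $x^{k_1}_{h_1 i} = 1$ and $x^{k_2}_{h_2 i} = 1$. The strategy is to derive a sharp lower bound on $l^s_i$ from the incoming link used by $k_2$, and a strictly smaller upper bound on the same quantity from the perspective of demand $k_1$ applied to that same link, exploiting the $\varepsilon$-gap built into (\ref{d:plcon}).

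First, I would apply the lower-bound half of (\ref{d:plcon}) to demand $k_2$ on link $(h_2, i)$. Because $x^{k_2}_{h_2 i} = 1$, the $M$-term vanishes and we obtain $l^s_i \ge l^s_{h_2} + w_{h_2 i}$. Next, I would apply the upper-bound half of (\ref{d:plcon}) to demand $k_1$ on the very same link $(h_2, i)$, and examine the coefficient $\sum_{h:(h,i)\in\mathcal{L}} x^{k_1}_{hi} - x^{k_1}_{h_2 i}$. The key observation is that this coefficient is at least $1$ in every case: if $x^{k_1}_{h_2 i} = 0$, then the sum already contributes at least $1$ via $x^{k_1}_{h_1 i} = 1$; if instead $x^{k_1}_{h_2 i} = 1$, then the sum contributes at least $2$ (from both $h_1$ and $h_2$), and subtracting $1$ still leaves at least $1$. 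Hence $l^s_i \le l^s_{h_2} + w_{h_2 i} - \varepsilon$, which contradicts the previous lower bound since $\varepsilon > 0$.

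The step I expect to require the most care is precisely the case distinction just described, namely verifying that $\sum_{h} x^{k_1}_{hi} - x^{k_1}_{h_2 i} \ge 1$ under the contradiction hypothesis. This is where the assumption $h_1 \neq h_2$ combined with $x^{k_1}_{h_1 i} = 1$ does the real work, and it also explains why the logic formulation (\ref{d:plcon0}) must distinguish the three scenarios on whether the routing path traverses $(i, j)$ and whether it visits $j$: the middle scenario, which forces the \emph{strict} inequality, is exactly what powers the contradiction. Once this bookkeeping is settled, the contradiction is immediate and the proposition follows.
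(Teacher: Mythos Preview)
Your argument is correct, and it is in fact tighter than the paper's. The paper proves the proposition by assuming two demands $k_1,k_2\in\mathcal{D}_s$ traverse two disjoint sub-paths $P_j$ and $P_i$ between shared nodes $u$ and $v$; it then evaluates $l^s_v$ twice along $P_j$, once via demand $k_1$ (where $x^{k_1}=1$ on every link, giving $l^s_v = l^s_u + l_{P_j}$) and once via demand $k_2$ (where $x^{k_2}=0$ on every link of $P_j$, with the strict case of (\ref{d:plcon0}) firing at the final merge node $v$, giving $l^s_v < l^s_u + l_{P_j}$). Your proof bypasses the path-tracing entirely: you localise the contradiction at the single node $i$ where (\ref{d:spcon}) fails, pair one instance of the $M$-constraint with one instance of the $\varepsilon$-constraint on the same link $(h_2,i)$, and close immediately. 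This buys you two things: you never need the routing paths to be well-formed (so flow conservation is not implicitly invoked), and the case $k_1=k_2$---a single demand with two active incoming links---is handled for free by the same inequality, whereas the paper's two-path setup tacitly assumes distinct demands. The paper's version, on the other hand, makes the geometric content (two competing sub-paths cannot both be unique shortest) more visible, and works from the logic form (\ref{d:plcon0}) rather than the linearised (\ref{d:plcon}).
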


\begin{proof}
Suppose there are two demands $k_1, k_2 \in \mathcal{D}$, and $s_{k_1} = s_{k_2} =s$. Assume they would use two disjoint paths to traverse from node $u$ to node
$v$. Demand $k_1$ would use path
\begin{eqnarray*}
P_j = (j_1, j_2) \to (j_2, j_3) \to \ldots \to (j_{n-1}, j_n), \ (j_l, j_{l + 1}) \in \mathcal{L}, l = 1, \ldots, n-1,
\end{eqnarray*}
where $j_1 = u$ and $j_n = v$, and demand $k_2$ would use path
\begin{eqnarray*}
P_i = (i_1, i_2) \to (i_2, i_3) \to \ldots \to (i_{m-1}, i_m), \ (i_{q}, i_{q + 1}) \in \mathcal{L}, q = 1, \ldots, m-1,
\end{eqnarray*}
where $i_1 = u$ and $i_m = v$.

Then, by the definition of the routing decision variables,
\begin{eqnarray*}
x^{k_1}_{j_l j_{l+1}} = 1 \ \textrm{and} \ x^{k_2}_{j_l j_{l+1}} = 0, \forall (j_l, j_{l+1}) \in P_j, l = 1, \ldots, n-1.
\end{eqnarray*}

As a result, according to constraints (\ref{d:plcon0}), on one hand, since $\forall k \in \mathcal{D}, \forall (i, j) \in \mathcal{L}$, $x^k_{ij} = 1 \Rightarrow
l^{s_k}_j = l^{s_k}_i + w_{ij}$, considering demand $k_1$,
\begin{eqnarray} \label{d:spcon:1}
l^{s}_{v} & = & l^{s}_{u} + l_{j_n}^u \nonumber \\ & = & l^{s}_{u} + l_{j_{n-1}}^u + w_{j_{n-1} j_n} \nonumber \\ & = & l^{s}_{u} + l_{j_{n-2}}^u + w_{j_{n-2}
j_{n-1}} + w_{j_{n-1} j_n} \nonumber \\ & = & l^{s}_{u} + l_{j_1}^u + w_{j_1 j_2} + \dots + w_{j_{n-1} j_n} \nonumber \\ & = & l^{s}_{u} + l_{P_j}.
\end{eqnarray}
On the other hand, since $\forall k \in \mathcal{D}, \forall (i, j) \in \mathcal{L}$, $x^k_{ij} = 0 \wedge \sum_{h: (h, j) \in \mathcal{L}}x^k_{hj} = 1
\Rightarrow l^{s_k}_j < l^{s_k}_i + w_{ij}$ and $x^k_{ij} = 0 \Rightarrow l^{s_k}_j \le l^{s_k}_i + w_{ij}$, considering demand $k_2$,
\begin{eqnarray} \label{d:spcon:2}
l^{s}_{v} & = & l^{s}_{u} + l_{j_n}^u \nonumber \\ & < & l^{s}_{u} + l_{j_{n-1}}^u + w_{j_{n-1} j_n} \nonumber \\ & \le & l^{s}_{u} + l_{j_{n-2}}^u + w_{j_{n-2}
j_{n-1}} + w_{j_{n-1} j_n} \nonumber \\ & \le & l^{s}_{u} + l_{j_1}^u + w_{j_1 j_2} + \dots + w_{j_{n-1} j_n} \nonumber \\ & = & l^{s}_{u} + l_{P_j}.
\end{eqnarray}

Contradiction then follows between (\ref{d:spcon:1}) and (\ref{d:spcon:2}), which means the two demands cannot be routed over two different paths between two
shared nodes. It is hence proved that the sub-path optimality constraints are satisfied.
\end{proof}

By Proposition~\ref{prop:d:spcon}, DBM is equivalent to the following model DBM$^\prime$.

\begin{center}
\textbf{DBM$^\prime$:} $\begin{array}{l} Optimize \ \ (\ref{d:obj}) \\ Subject \ to \ (\ref{d:fccon}), (\ref{d:lccon}), (\ref{d:plcon}), (\ref{d:spcon}),
(\ref{d:rdvar}), (\ref{d:lwvar}), (\ref{d:plvar}) \end{array}$
\end{center}

\subsection{An Origin-Based Model}

In Section \ref{sec:dbm}, the unique shortest path routing problem is formulated as a demand-based model, which defines one routing decision variable for each
link-demand pair. Based on the study of solution properties of the problem, it can be found that all routing paths of demands originating from the same node
constitute a tree, rooted at the origin node. Accordingly, a more natural formulation for the problem is to define one routing decision variable for each
link-origin pair. For example, in Figure \ref{fig:o:tree}, instead of defining three routing decision variables for link $(i, j)$, one for each of the three
demands sharing the same origin node $s$, the new formulation defines only one routing decision variable for link $(i, j)$, associated with origin node $s$.
\begin{figure}[h!]
\begin{center}
\includegraphics [width = 0.66\textwidth] {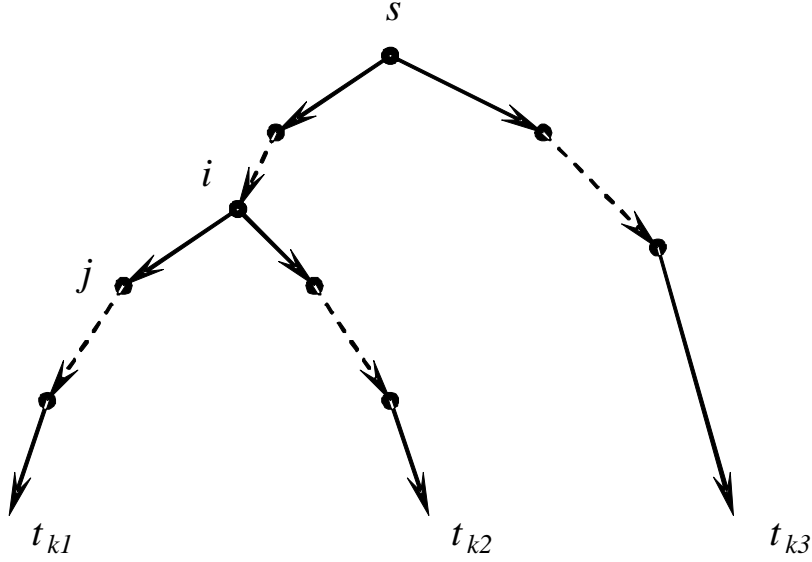}
\caption{Illustration of the origin-based model} \label{fig:o:tree}
\end{center}
\end{figure}

Based on the above observations, an \textit{origin-based} model (OBM) for the problem is formulated as follows.

\textit{Routing decision variables}:
\begin{eqnarray} \label{o:rdvar}
y^s_{ij} \in \{0,1\}, \forall s \in \mathcal{S}, \forall (i, j) \in \mathcal{L}
\end{eqnarray}
is equal to $1$ if and only if the routing paths of at least one of the demands originating from node $s$ traverse link $(i, j)$. The number of this set of
variables is $|\mathcal{S}||\mathcal{L}|$.

\textit{Auxiliary flow variables}:
\begin{eqnarray} \label{o:afvar}
f^s_{ij} \in [0, +\infty), \forall s \in \mathcal{S}, \forall (i, j) \in \mathcal{L}
\end{eqnarray}
represents the load of traffic flows originating from node $s$ and traversing link $(i, j)$. The number of this set of variables is $|\mathcal{S}||\mathcal{L}|$.

\textit{Link weight variables}:
\begin{eqnarray} \label{o:lwvar}
w_{ij} \in [w_{\mathrm{min}}, w_{\mathrm{max}}], \forall (i, j) \in \mathcal{L}
\end{eqnarray}
represents the routing cost of link $(i,j)$. The number of this set of variables is $|\mathcal{L}|$.

\textit{Path length variables}:
\begin{eqnarray} \label{o:plvar}
l^s_i \in [0, +\infty), \forall s \in \mathcal{S}, \forall i \in \mathcal{N}
\end{eqnarray}
denotes the length of the shortest path from origin node $s$ to node $i$. In particular, $l^s_s = 0, \forall s \in \mathcal{S}$. The number of this set of
variables is $|\mathcal{S}||\mathcal{N}|$.

\textit{Flow conservation constraints}: for each tree, at the root node, the difference between the sum of outgoing flows and the sum of incoming flows is the sum
of bandwidths of all demands originating from the root node; at the destination node of each demand originating from the root node, the difference between the sum
of incoming flows and the sum of outgoing flows is the bandwidth of the demand; and the sum of incoming flows is equal to that of outgoing flows at any other
node.
\begin{eqnarray} \label{o:fccon}
\sum_{h: (h, i) \in \mathcal{L}}f^s_{hi} - \sum_{j: (i,j) \in \mathcal{L}}f^s_{ij} = \left \{\begin{array}{rl} -d_s, & \textrm{if} \ i = s \\ d_k, & \textrm{if} \
i = t_k, \forall k \in \mathcal{D}_s \\ 0, & \textrm{otherwise} \end{array} \right., \forall s \in \mathcal{S}, \forall i \in \mathcal{N}
\end{eqnarray}
where $d_s = \sum_{k \in \mathcal{D}_s} d_k$. The number of this set of constraints is $|\mathcal{S}||\mathcal{N}|$.

\textit{Flow bound constraints}: for each tree, the total flow load over each link does not exceed the sum of all demand bandwidths originating from the root node
and it is equal to zero if no demand originating from the root node is routed over the link.
\begin{eqnarray} \label{o:fbcon}
f^s_{ij} \le y^s_{ij} \sum_{k \in \mathcal{D}_s}d_k, \forall s \in \mathcal{S}, \forall (i, j) \in \mathcal{L}
\end{eqnarray}
The number of this set of constraints is $|\mathcal{S}||\mathcal{L}|$.

\textit{Link capacity constraints}:
\begin{eqnarray} \label{o:lccon}
\sum_{s \in \mathcal{S}} f^s_{ij} \leq c_{ij}, \forall (i, j) \in \mathcal{L}
\end{eqnarray}
The number of this set of constraints is $|\mathcal{L}|$.

\textit{Path uniqueness constraints}: for each tree, the number of incoming links with non-zero flows is equal to zero at the origin node; the number of incoming
links with non-zero flows is equal to one at the destination node of each demand originating from the root node; and the number of incoming links with non-zero
flows does not exceed one at any other node.
\begin{eqnarray} \label{o:pucon}
\sum_{h: (h, i) \in \mathcal{L}} y^s_{hi} \left\{\begin{array}{ll} = 0, & \textrm{if} \ i = s \\ = 1, & \textrm{if} \ i \in \mathcal{T}_s \\ \le 1, &
\textrm{otherwise} \end{array} \right., \forall s \in \mathcal{S}, \forall i \in \mathcal{N}
\end{eqnarray}
The number of this set of constraints is $|\mathcal{S}||\mathcal{N}|$.

\textit{Path length constraints}: for each tree, the length of the unique shortest path to route a demand originating from the root node is less than that of any
other possible path from the origin node to the destination node.
\begin{eqnarray} \label{o:plcon0}
\left. \begin{array}{r} \displaystyle y^s_{ij} = 0 \wedge \sum_{h: (h, j) \in \mathcal{L}}y^s_{hj} = 0 \Rightarrow l^s_j \le l^s_i + w_{ij} \\ \displaystyle
y^s_{ij} = 0 \wedge \sum_{h: (h, j) \in \mathcal{L}}y^s_{hj} = 1 \Rightarrow l^s_j < l^s_i + w_{ij} \\ \displaystyle y^s_{ij} = 1 \Rightarrow l^s_j = l^s_i +
w_{ij} \end{array} \right\}, \forall s \in \mathcal{S}, \forall (i, j) \in \mathcal{L}
\end{eqnarray}
The logic constraints (\ref{o:plcon0}) can be linearized as follows:
\begin{eqnarray} \label{o:plcon}
\left. \begin{array}{l} \displaystyle l^s_j \le l^s_i + w_{ij} - \varepsilon \left(\sum_{h: (h, j) \in \mathcal{L}}y^s_{hj} - y^s_{ij}\right) \\ \displaystyle
l^s_j \ge l^s_i + w_{ij} - M(1 - y^s_{ij}) \end{array} \right\}, \forall s \in \mathcal{S}, \forall (i, j) \in \mathcal{L}
\end{eqnarray}
where $\varepsilon$ and $M$ are appropriate constants with $0 < \varepsilon \ll M$. The number of this set of constraints is $2|\mathcal{S}||\mathcal{L}|$.

\textit{Objective function}:
\begin{eqnarray*}
\max \sum_{(i, j) \in \mathcal{L}}\left(c_{ij} - \sum_{s \in \mathcal{S}} f^s_{ij}\right)
\end{eqnarray*}
which is equivalent to
\begin{eqnarray} \label{o:obj}
\mbox{min} \sum_{(i, j) \in \mathcal{L}}\sum_{s \in \mathcal{S}} f^s_{ij}
\end{eqnarray}

Accordingly, the resulting complete model is presented as

\begin{center}
\textbf{OBM:} $\begin{array}{l} Optimize \ \ (\ref{o:obj}) \\ Subject \ to \ (\ref{o:fccon}), (\ref{o:fbcon}), (\ref{o:lccon}), (\ref{o:pucon}), (\ref{o:plcon}),
(\ref{o:rdvar}), (\ref{o:afvar}), (\ref{o:lwvar}), (\ref{o:plvar}) \end{array}$
\end{center}

\section{Model Verification} \label{sec:vrf}

The correctness of OBM is verified in this section. The verification is divided into two steps.

First, DBM is verified to be a correct model of the unique shortest path routing problem.

Second, OBM and DBM are mathematically proved to be equivalent concerning both the feasibility and the optimality of the problem, which implies that OBM is also a
correct model of the problem.

Since a relaxation of the unique shortest path routing problem has been thoroughly studied and a corresponding demand-based formulation for the relaxed problem
has been recognized as being correct, the verification of DBM is built on the correctness of the formulation for the relaxed problem, the \textit{integer
multi-commodity flow problem} (Ahuja et al., 1993) with sub-path optimality condition. Also, the equivalence between OBM and DBM is demonstrated based on the
proof of the equivalence between two corresponding models of the relaxation, which are denoted as RDBM and ROBM, respectively.

\begin{center}
$\begin{array}{l} \textbf{RDBM:} \begin{array}{l} Optimize \ \ (\ref{d:obj}) \\ Subject \ to \ (\ref{d:fccon}), (\ref{d:lccon}), (\ref{d:spcon}), (\ref{d:rdvar})
\end{array}
\vspace{1 ex} \\
\textbf{ROBM:} \begin{array}{l} Optimize \ \ (\ref{o:obj}) \\ Subject \ to \ (\ref{o:fccon}), (\ref{o:fbcon}), (\ref{o:lccon}), (\ref{o:pucon}), (\ref{o:rdvar}),
(\ref{o:afvar}) \end{array} \end{array}$
\end{center}

As can be noted, RDBM is actually a relaxation of DBM$^\prime$, which is equivalent to DBM by Proposition~\ref{prop:d:spcon}.

\subsection{Correctness of DBM}

To verify the correctness of DBM, that of the equivalent model DBM$^\prime$ is proved. Apparently, the difference between DBM$^\prime$ and RDBM lies in the path
length constraints (\ref{d:plcon}) and the additional link weight variables (\ref{d:lwvar}) as well as the path length variables (\ref{d:plvar}). In order to
verify that DBM$^\prime$ formulates the unique shortest path routing problem correctly, constraints (\ref{d:plcon}) in DBM$^\prime$ are proved to represent
correctly the additional path length constraints. Specifically, the following two statements are demonstrated to be correct. In DBM$^\prime$, the path length
constraints (\ref{d:plcon}), combined with the flow conservation constraints (\ref{d:fccon}), guarantee that:
\begin{enumerate}
\item The routing path of each demand is a shortest path; and
\item The routing path of each demand is a unique path.
\end{enumerate}

The two statements are verified in Proposition~\ref{prop:d:sp} and Proposition~\ref{prop:d:up}, respectively. Proposition~\ref{prop:d:sp} is proved by
demonstrating that the length of the routing path of each demand is less than or equal to that of any other possible path. The proof of
Proposition~\ref{prop:d:up} is built on, with two lemmas, the satisfaction of the single-path requirement by the relaxed problem RDBM, followed by the proof that
the uniqueness requirement is satisfied by DBM$^\prime$. As the original logic constraints (\ref{d:plcon0}) are identical to the linearized constraints
(\ref{d:plcon}), the following proof is based on the original constraints.

\begin{proposition} \label{prop:d:sp}
Each routing path resulting from the solution to DBM$^\prime$ is a shortest path.
\end{proposition}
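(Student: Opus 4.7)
The plan is to exploit the three implications in (\ref{d:plcon0}) to sandwich the length of the routing path between the path length variable $l^{s_k}_{t_k}$ and the length of an arbitrary competing path. Observe first that the three cases of (\ref{d:plcon0}) collectively yield the uniform inequality $l^{s_k}_j \le l^{s_k}_i + w_{ij}$ on every link $(i,j) \in \mathcal{L}$ for every demand $k$: cases~1 and~2 give this inequality directly, while case~3 gives it with equality. This single observation will handle the comparison path.

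First, I fix a demand $k \in \mathcal{D}$ in a feasible solution of DBM$^\prime$ and extract a simple routing path $P_k = (s_k = v_0, v_1, \ldots, v_r = t_k)$ from the binary flow $\{x^k_{ij}\}$ via a standard flow decomposition argument; the flow conservation constraints (\ref{d:fccon}) guarantee that the support of $x^k$ contains such a path. For each link $(v_{l}, v_{l+1})$ of $P_k$ we have $x^k_{v_l v_{l+1}} = 1$, so case~3 of (\ref{d:plcon0}) gives $l^{s_k}_{v_{l+1}} = l^{s_k}_{v_l} + w_{v_l v_{l+1}}$. Telescoping along $P_k$ and using $l^{s_k}_{s_k} = 0$, I obtain
\begin{eqnarray*}
l^{s_k}_{t_k} \;=\; \sum_{l=0}^{r-1} w_{v_l v_{l+1}} \;=\; \text{length}(P_k).
\end{eqnarray*}

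Next, let $Q = (s_k = u_0, u_1, \ldots, u_m = t_k)$ be any other path from $s_k$ to $t_k$. For each link $(u_l, u_{l+1})$, the uniform inequality noted above gives $l^{s_k}_{u_{l+1}} \le l^{s_k}_{u_l} + w_{u_l u_{l+1}}$, regardless of the value of $x^k_{u_l u_{l+1}}$. Telescoping along $Q$ yields $l^{s_k}_{t_k} \le \text{length}(Q)$. Combining with the previous equality gives $\text{length}(P_k) \le \text{length}(Q)$, which is exactly the claim that $P_k$ is a shortest $s_k$-$t_k$ path.

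The main subtlety I anticipate is the extraction step at the beginning: the flow conservation constraints (\ref{d:fccon}) alone do not rule out the support of $x^k$ containing extraneous structure such as a disjoint cycle, in which case the ``routing path'' interpretation of $P_k$ must be justified. I would address this by a brief flow-decomposition argument: since $x^k$ is a $\{0,1\}$-valued $s_k$-$t_k$ flow of value one, its support decomposes into one simple $s_k$-$t_k$ path plus a (possibly empty) collection of cycles, and any such simple path serves as $P_k$ for the telescoping step, since case~3 of (\ref{d:plcon0}) applies to each of its links. Everything else is a direct application of (\ref{d:plcon0}) and requires no further input.
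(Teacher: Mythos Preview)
Your proof is correct and takes essentially the same approach as the paper: telescope the equality from case~3 of (\ref{d:plcon0}) along the routing path to obtain $l^{s_k}_{t_k} = \text{length}(P_k)$, then telescope the uniform inequality $l^{s_k}_j \le l^{s_k}_i + w_{ij}$ along any competing path to obtain $l^{s_k}_{t_k} \le \text{length}(Q)$. Your explicit flow-decomposition remark for extracting $P_k$ is in fact more careful than the paper, which simply posits ``the assigned routing path'' without addressing possible cycles in the support of $x^k$; otherwise the two arguments are identical.
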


\begin{proof}
Assume for demand $k$,
\begin{eqnarray*}
P_j = (j_1, j_2) \to (j_2, j_3) \to \ldots \to (j_{n-1}, j_n), \ (j_l, j_{l+1}) \in \mathcal{L}, l = 1, \ldots, n-1,
\end{eqnarray*}
where $j_1 = s_k$ and $j_n = t_k$, is the assigned routing path, and
\begin{eqnarray*}
P_i = (i_1, i_2) \to (i_2, i_3) \to \ldots \to (i_{m-1}, i_m), \ (i_{q}, i_{q+1}) \in \mathcal{L}, q = 1, \ldots, m-1,
\end{eqnarray*}
where $i_1 = s_k$ and $i_m = t_k$, is any other possible and non-assigned path from $s_k$ to $t_k$.

Then, by the definition of the routing decision variables, $x^k_{j_l j_{l+1}} = 1$, $\forall (j_l, j_{l+1}) \in P_j$, $l = 1, \ldots, n-1$ and $\exists (i_q,
i_{q+1}) \in P_i$, $x^k_{i_q i_{q+1}} = 0$, $q \in \{1, 2, \ldots ,{m-1}\}$.

As a result, according to constraints (\ref{d:plcon0}), on one hand, since $\forall (i, j) \in \mathcal{L}$, $x^k_{ij} = 1 \Rightarrow l^{s_k}_j = l^{s_k}_i +
w_{ij}$,
\begin{eqnarray} \label{d:pj}
l^{s_k}_{t_k} = l^{s_k}_{j_n} = l^{s_k}_{j_{n-1}} + w_{j_{n-1} j_n} = l^{s_k}_{j_1} + w_{j_1 j_2} + \ldots + w_{j_{n-1} j_n} = l_{P_j}.
\end{eqnarray}

On the other hand, since $\forall (i, j) \in \mathcal{L}$, $x^k_{ij} = 0 \Rightarrow l^{s_k}_j \le l^{s_k}_i + w_{ij}$,
\begin{eqnarray} \label{d:pi}
l^{s_k}_{t_k} = l^{s_k}_{i_m} \le l^{s_k}_{i_{m-1}} + w_{i_{m-1} i_m} \le l^{s_k}_{i_1} + w_{i_1 i_2}+ \ldots + w_{i_{m-1} i_m} = l_{P_i}.
\end{eqnarray}

It thus follows that $l_{P_j} \le l_{P_i}$ from (\ref{d:pj}) and (\ref{d:pi}). Hence, the length of the routing path $P_j$ is less than or equal to the length of
any other possible path $P_i$ to route demand $k$. It is therefore proved that the routing path $P_j$ is the shortest one.
\end{proof}

In order to prove that DBM$^\prime$ satisfies the path uniqueness constraints, the constraints are demonstrated to be satisfied by the relaxed problem RDBM first.

\begin{lemma} \label{lem:d:fl}
No optimal solution to RDBM contains flow loops.
\end{lemma}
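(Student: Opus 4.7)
The plan is to argue by contradiction using a cycle-cancellation operation. Suppose $\hat x$ is an optimal solution to RDBM that contains a flow loop, meaning there exist a demand $k \in \mathcal{D}$ and a directed cycle $C = (v_1, v_2), (v_2, v_3), \ldots, (v_{p-1}, v_p), (v_p, v_1)$ of links in $\mathcal{L}$ with $\hat x^k_{v_l v_{l+1}} = 1$ for every edge of $C$. From $\hat x$ I will construct a new point $\tilde x$ by setting $\tilde x^k_{ij} = 0$ for every $(i,j) \in C$ while leaving every other component of $\hat x$ unchanged, and then show that $\tilde x$ is feasible for RDBM and strictly improves the objective, contradicting optimality.

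The verification is a routine check of the four constraint families of RDBM, which I would carry out in order. First, the flow conservation constraints (\ref{d:fccon}) for demand $k$ remain satisfied because at each cycle vertex $v_l$ exactly one incoming and one outgoing edge of $k$ are removed, so the net balance at every node is preserved; constraints for demands $k' \neq k$ are untouched. Second, the link capacity constraints (\ref{d:lccon}) remain satisfied since $\tilde x^{k'}_{ij} \le \hat x^{k'}_{ij}$ pointwise, so the left-hand side of (\ref{d:lccon}) can only decrease. Third, the sub-path optimality constraints (\ref{d:spcon}) are preserved by the same monotonicity: decreasing any entry $x^{k'}_{hi}$ can only decrease $\max_{k' \in \mathcal{D}_s} x^{k'}_{hi}$, and hence can only decrease the sum. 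Fourth, binarity (\ref{d:rdvar}) is evident. For the objective (\ref{d:obj}), its value drops by exactly $p \cdot d_k$, which is strictly positive since $p \ge 2$ and $d_k > 0$, yielding the required contradiction.

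The only subtlety, which I would address at the very beginning of the write-up, is to pin down precisely what a flow loop means. Under the binary restriction (\ref{d:rdvar}) combined with the conservation law (\ref{d:fccon}), the support $\{(i,j) : \hat x^k_{ij} = 1\}$ decomposes into a simple $s_k$-to-$t_k$ path together with a (possibly empty) union of edge-disjoint directed cycles; a flow loop is exactly one such cycle, and in particular every one of its edges carries $\hat x^k = 1$, which is what licenses zeroing the cycle out wholesale while remaining within $\{0,1\}$. Once that decomposition is made explicit, the cycle-cancellation argument above is entirely mechanical, and I do not anticipate any deeper obstacle in this lemma.
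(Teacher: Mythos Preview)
Your proposal is correct and follows essentially the same cycle-cancellation argument as the paper: assume an optimal solution carries a directed cycle for some demand, zero out that cycle, verify that all RDBM constraints (\ref{d:fccon}), (\ref{d:lccon}), (\ref{d:spcon}), (\ref{d:rdvar}) remain satisfied by monotonicity, and observe that the objective strictly decreases, contradicting optimality. Your added remark about the path-plus-cycles decomposition of the support of $\hat x^k$ is a helpful clarification that the paper leaves implicit, but it does not change the line of argument.
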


\begin{proof}
Suppose ${x^*}_{ij}^{k}, k \in \mathcal{D}, (i, j) \in \mathcal{L}$ is an optimal solution to RDBM, then all constraints (\ref{d:fccon}), (\ref{d:lccon}), and
(\ref{d:spcon}) are satisfied by ${x^*}_{ij}^{k}, k \in \mathcal{D}, (i, j) \in \mathcal{L}$. Specifically,
\begin{eqnarray} \label{d:fccon*}
\sum_{h: (h, i) \in \mathcal{L}}{x^*}^{k}_{hi} - \sum_{j: (i, j) \in \mathcal{L}}{x^*}_{ij}^{k} = \left\{\begin{array}{rl} -1, & \textrm{if} \ i = s_k \\ 1, &
\textrm{if} \ i = t_k \\ 0, & \textrm{otherwise} \end{array} \right., \forall k \in \mathcal{D}, \forall i \in \mathcal{N},
\end{eqnarray}
\begin{eqnarray} \label{d:lccon*}
\sum_{k \in \mathcal{D}}d_k {x^*}_{ij}^{k} \leq c_{ij}, \forall (i, j) \in \mathcal{L},
\end{eqnarray}
and
\begin{eqnarray} \label{d:spcon*}
\sum_{h:(h, i) \in \mathcal{L}} \max_{k \in \mathcal{D}_s} {x^*}_{hi}^k \le 1, i \ne s, \forall s \in \mathcal{S}, \forall i \in \mathcal{N}.
\end{eqnarray}

Assume there would exist $k_l \in \mathcal{D}$ and a loop $\mathcal{C}: (j_1, j_2) \to \dots \to (j_{n-1}, j_n) \to (j_n, j_{n+1}), (j_i, j_{i+1}) \in
\mathcal{L}, i = 1, \dots, n$ and $j_{n+1} = j_1$, such that ${x^*}_{j_i j_{i+1}}^{k_l} = 1, i = 1, \dots, n$. Obviously,
\begin{eqnarray} \label{d:fccon:L}
\sum_{h: (h, i) \in \mathcal{C}}{x^*}^{k_l}_{hi} - \sum_{j: (i, j) \in \mathcal{C}}{x^*}_{ij}^{k_l} = 0, \forall i \in \{j_1, \dots, j_n\}.
\end{eqnarray}

Let
\begin{eqnarray}  \label{d:yvar}
{y^*}_{ij}^{k} = \left \{\begin{array}{rl} {x^*}_{ij}^{k}, & \textrm{if} \ k \ne k_l \ \textrm{or} \ (i, j) \notin \mathcal{C} \\ 0, & \textrm{otherwise}
\end{array} \right., \forall k \in \mathcal{D}, \forall (i, j) \in \mathcal{L}.
\end{eqnarray}

Then, ${y^*}_{ij}^{k} \in \{0, 1\}, \forall k \in \mathcal{D}, \forall (i, j) \in \mathcal{L}$ and ${y^*}_{j_i j_{i+1}}^{k_l} = 0, \forall (j_i, j_{i+1}) \in
\mathcal{C}, i = 1, \dots, n$. In addition, according to (\ref{d:fccon*}), (\ref{d:fccon:L}), and (\ref{d:yvar}),
\begin{eqnarray*}
\sum_{h: (h, i) \in \mathcal{L}}{y^*}^{k}_{hi} - \sum_{j: (i, j) \in \mathcal{L}}{y^*}_{ij}^{k} = \sum_{h: (h, i) \in \mathcal{L}}{x^*}^{k}_{hi} - \sum_{j: (i, j)
\in \mathcal{L}}{x^*}_{ij}^{k} = \left\{\begin{array}{rl} -1, & \textrm{if} \ i = s_k \\ 1, & \textrm{if} \ i = t_k \\ 0, & \textrm{otherwise} \ \end{array}
\right., \forall k \in \mathcal{D}, \forall i \in \mathcal{N},
\end{eqnarray*}
according to (\ref{d:lccon*}) and (\ref{d:yvar}),
\begin{eqnarray*}
\sum_{k \in \mathcal{D}}d_k {y^*}_{ij}^{k} \le \sum_{k \in \mathcal{D}}d_k {x^*}_{ij}^{k} \le c_{ij}, \forall (i, j) \in \mathcal{L},
\end{eqnarray*}
and according to (\ref{d:spcon*}) and (\ref{d:yvar}),
\begin{eqnarray*}
\sum_{h:(h, i) \in \mathcal{L}} \max_{k \in \mathcal{D}_s} {y^*}_{hi}^k \le \sum_{h:(h, i) \in \mathcal{L}} \max_{k \in \mathcal{D}_s} {x^*}_{hi}^k \le 1, i \ne
s, \forall s \in \mathcal{S}, \forall i \in \mathcal{N}.
\end{eqnarray*}

Hence, ${y^*}_{ij}^{k}, k \in \mathcal{D}, (i, j) \in \mathcal{L}$ satisfies all constraints (\ref{d:fccon}), (\ref{d:lccon}), and (\ref{d:spcon}). It is
therefore a feasible solution to RDBM. Furthermore, according to (\ref{d:yvar}), since $\forall k \in \mathcal{D}, d_k > 0$,
\begin{eqnarray*}
\displaystyle \sum_{(i, j) \in \mathcal{L}} \sum_{k \in \mathcal{D}}d_k {y^*}_{ij}^{k} & = & \sum_{(i, j) \in \mathcal{L} \setminus \mathcal{C} \lor k \in
\mathcal{D}: k \ne k_l} d_k {y^*}_{ij}^{k} + \sum_{(i, j) \in \mathcal{C}} d_{k_l} {y^*}_{ij}^{k_l} \\ & = & \sum_{(i, j) \in \mathcal{L} \setminus \mathcal{C}
\lor k \in \mathcal{D}: k \ne k_l} d_k {y^*}_{ij}^{k} \\ & = & \sum_{(i, j) \in \mathcal{L} \setminus \mathcal{C} \lor k \in \mathcal{D}: k \ne k_l} d_k
{x^*}_{ij}^{k} \\ & < & \sum_{(i, j) \in \mathcal{L} \setminus \mathcal{C} \lor k \in \mathcal{D}: k \ne k_l} d_k {x^*}_{ij}^{k} + \sum_{(i, j) \in \mathcal{C}}
d_{k_l} {x^*}_{ij}^{k_l} \\ & = & \sum_{(i, j) \in \mathcal{L}} \sum_{k \in \mathcal{D}}d_k {x^*}_{ij}^{k}.
\end{eqnarray*}

Then, ${x^*}_{ij}^{k}, k \in \mathcal{D}, (i, j) \in \mathcal{L}$ could not be an optimal solution to RDBM, which results in contradiction. It hence proves that
no optimal solution to RDBM contains flow loops.
\end{proof}

Based on Lemma~\ref{lem:d:fl}, the path uniqueness constraints are proved to be satisfied by RDBM in Lemma~\ref{lem:d:pucon}.

\begin{lemma} \label{lem:d:pucon}
The path uniqueness constraints are satisfied by RDBM.
\end{lemma}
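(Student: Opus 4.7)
The plan is to show that, at any optimal solution $x^*$ to RDBM, the support of $x^{*k}$ for each demand $k$ is a single simple path from $s_k$ to $t_k$, which is exactly what path uniqueness requires. The argument rests on combining Lemma~\ref{lem:d:fl} with the integrality of the routing decision variables and the demand-by-demand conservation equations.

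First I would fix an optimal solution $x^*$ and a demand $k \in \mathcal{D}$, and consider the support subgraph $G_k = \{(i,j) \in \mathcal{L} : {x^*}^k_{ij} = 1\}$. Because ${x^*}^k_{ij} \in \{0,1\}$, the flow conservation constraints (\ref{d:fccon}) are degree conditions on $G_k$: the out-degree minus in-degree equals $-1$ at $s_k$, $+1$ at $t_k$, and $0$ at every other node. In particular, $G_k$ carries a nonnegative integer $s_k$--$t_k$ flow of value one.

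Next I would invoke the standard integral flow decomposition theorem: any nonnegative integer $s$--$t$ flow of value $v$ decomposes as $v$ unit $s$--$t$ path flows plus a finite collection of unit directed-cycle flows supported on the same edges. Applied to $G_k$, the decomposition uses one unit $s_k$--$t_k$ path flow plus some unit cycle flows. Lemma~\ref{lem:d:fl} rules out any cycle in the support, so no cycle flows can appear, and the decomposition reduces to a single simple path from $s_k$ to $t_k$. Since each edge in $G_k$ must be covered by this decomposition, $G_k$ coincides with that path, which is exactly the statement that demand $k$ is routed along a unique path.

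I expect the main obstacle to be justifying the cycle-free integral decomposition at the level of rigor used in the rest of the paper, rather than citing it; if invoking it directly is unsatisfactory, I would instead give a direct trace argument, walking forward from $s_k$ along outgoing support edges (always possible by the degree conditions at intermediate nodes), noting that by finiteness and the absence of loops the walk terminates at $t_k$, and then ruling out branching by observing that two edge-disjoint forward traces from $s_k$ would both terminate at $t_k$, forcing in-degree $\ge 2$ there, hence a compensating outgoing edge by conservation, and continuing the walk would eventually revisit a node and create a cycle, contradicting Lemma~\ref{lem:d:fl}. Either route is short once no-loops is in hand; the sub-path optimality constraints (\ref{d:spcon}) are not even needed for this step, as the demand-wise conservation together with the no-loop property is already enough to force a single $s_k$--$t_k$ path per demand.
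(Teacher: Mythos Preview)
Your argument is correct and follows essentially the same route as the paper: both combine Lemma~\ref{lem:d:fl} with the integrality of the $x$ variables and the per-demand conservation equations to force a single $s_k$--$t_k$ path. The paper phrases this as a direct degree count (no-loops forces in-degree~$0$ and out-degree~$1$ at $s_k$, the reverse at $t_k$, and balance elsewhere, hence a single path), while you invoke integral flow decomposition and then eliminate the cycle components via Lemma~\ref{lem:d:fl}; these are the same idea in different clothing. One small slip: your degree signs are reversed (out-degree minus in-degree is $+1$ at $s_k$ and $-1$ at $t_k$, since the conservation constraint reads $\text{in}-\text{out}=-1$ at the source), but this does not affect the substance of the argument.
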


\begin{proof}
By Lemma~\ref{lem:d:fl}, in RDBM, the flow conservation constraints (\ref{d:fccon}) at origin nodes are equivalent to:
\begin{eqnarray} \label{d:fccon:s}
\sum_{h: (h, i) \in \mathcal{L}}x^k_{hi} = 0 \ \mathrm{and} \ \sum_{j: (i, j) \in \mathcal{L}}x^k_{ij} = 1, i = s_k, \forall k \in \mathcal{D}.
\end{eqnarray}

Similarly, the flow conservation constraints (\ref{d:fccon}) at destination nodes are equivalent to:
\begin{eqnarray} \label{d:fccon:t}
\sum_{h: (h, i) \in \mathcal{L}}x^k_{hi} = 1 \ \mathrm{and} \ \sum_{j: (i, j) \in \mathcal{L}}x^k_{ij} = 0, i = t_k, \forall k \in \mathcal{D}.
\end{eqnarray}

Since $x_{ij}^{k} \in \{0, 1\}, \forall k \in \mathcal{D}, \forall (i, j) \in \mathcal{L}$, constraints (\ref{d:fccon:s}) restrict that there is one, and only
one, outgoing link with a non-zero flow from the origin node of demand $k$. Similarly, constraints (\ref{d:fccon:t}) restrict that there is one, and only one,
incoming link with a non-zero flow into the destination node of demand $k$. In addition, constraints (\ref{d:fccon}) guarantee that, at each intermediate node,
the number of incoming links with non-zero flows is equal to the number of outgoing links with non-zero flows. Hence, for each demand $k$, the number of routing
paths is no more than one. Therefore, the path uniqueness constraints are satisfied.
\end{proof}

\begin{proposition} \label{prop:d:up}
The path length constraints in DBM$^\prime$ restrict that the resulting shortest path of each demand is a unique path.
\end{proposition}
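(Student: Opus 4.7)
My plan is to combine the single-path property (already established for RDBM by Lemmas~\ref{lem:d:fl} and~\ref{lem:d:pucon}) with a strict-inequality refinement of the telescoping in Proposition~\ref{prop:d:sp}, upgrading ``$P_j$ is a shortest path'' to ``$P_j$ is the unique shortest path.'' Since DBM$^\prime$ is a tightening of RDBM by the path length constraints and additional variables, any DBM$^\prime$-feasible $x$ is RDBM-feasible, so Lemmas~\ref{lem:d:fl} and~\ref{lem:d:pucon} deliver a well-defined single routing path $P_j$ for each demand $k$. It then remains to rule out any other $s_k$-to-$t_k$ path $P_i$ having the same length as $P_j$.

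To sharpen the weak inequality $l_{P_j} \le l_{P_i}$ from~(\ref{d:pi}) to a strict one, I would locate an edge of $P_i$ that triggers the middle case of~(\ref{d:plcon0}), namely an edge $(i_q, i_{q+1})$ with $x^k_{i_q i_{q+1}} = 0$ and $\sum_{h:(h,i_{q+1})\in\mathcal{L}} x^k_{h i_{q+1}} = 1$, which yields $l^{s_k}_{i_{q+1}} < l^{s_k}_{i_q} + w_{i_q i_{q+1}}$. Such an edge would be produced by walking backward along $P_i$ starting at $t_k$, maintaining the invariant that the current node lies on $P_j$: as long as the $P_i$-edge into the current node agrees with the corresponding $P_j$-edge, Lemma~\ref{lem:d:pucon} guarantees that its tail is the unique $P_j$-predecessor of the current node, preserving the invariant. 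Because $P_i \ne P_j$ but both paths share the endpoints $s_k$ and $t_k$, this walk cannot exhaust $P_i$ without producing a divergence, and at the first such divergence the incoming $P_i$-edge $(i_q, i_{q+1})$ is off $P_j$ while $i_{q+1}$ still lies on $P_j$ and admits a unique $P_j$-incoming edge, which is precisely the hypothesis of case~2.

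Once this diverging edge is identified, the rest is a telescoping argument parallel to~(\ref{d:pi}): case~3 collapses the matching suffix of $P_i$ into equalities, case~2 at $(i_q, i_{q+1})$ contributes the single strict inequality, and case~1 handles the remaining prefix with weak inequalities. Summing along the chain yields $l_{P_j} = l^{s_k}_{t_k} < l_{P_i}$, and since $P_i$ was an arbitrary alternative path, $P_j$ is the unique shortest path from $s_k$ to $t_k$, as required.

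The main obstacle I anticipate is the combinatorial bookkeeping of the backward traversal---in particular, verifying that the invariant ``current node lies on $P_j$'' is genuinely preserved at each step and that the walk cannot terminate silently at $s_k$ without exhibiting a diverging edge, which would contradict $P_i \ne P_j$ in combination with the single-path property inherited from Lemma~\ref{lem:d:pucon}. With that discrete step nailed down, the remainder of the argument is a direct application of the three implications in~(\ref{d:plcon0}).
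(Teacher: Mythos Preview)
Your proposal is correct and follows essentially the same route as the paper: both invoke Lemma~\ref{lem:d:pucon} to obtain a single routing path $P_j$, locate a node on $P_j$ at which the alternative path $P_i$ enters via a different incoming edge (the paper calls this the node where the two paths ``finally merge,'' while you find it by your backward walk), apply the middle implication of~(\ref{d:plcon0}) there to produce one strict inequality, and telescope to conclude $l_{P_j} < l_{P_i}$. Your explicit invariant-maintaining traversal is a slightly more careful justification of the existence of this merge node than the paper's one-line assertion, but the underlying argument is identical.
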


\begin{proof}
Since DBM$^\prime$ is a reduction of RDBM, the solution to the routing decision variables $x^k_{ij}, k \in \mathcal{D}, (i, j) \in \mathcal{L}$ of DBM$^\prime$ is
also a solution to those of RDBM.

By Lemma~\ref{lem:d:pucon}, there is only one routing path for each demand. Suppose for demand $k$,
\begin{eqnarray*}
P_j = (j_1, j_2) \to (j_2, j_3) \to \ldots \to (j_{n-1}, j_n), \ (j_l, j_{l+1}) \in \mathcal{L}, l = 1, \dots, n-1,
\end{eqnarray*}
where $j_1 = s_k, j_n = t_k$, is the assigned routing path, and
\begin{eqnarray*}
P_i = (i_1, i_2) \to (i_2, i_3) \to \ldots \to (i_{m-1}, i_m), \ (i_q, i_{q+1}) \in \mathcal{L}, q = 1, \dots, m-1,
\end{eqnarray*}
where $i_1 = s_k$ and $i_m = t_k$, is any other possible and non-assigned path from $s_k$ to $t_k$.

Then, by the definition of the routing decision variables, $x^k_{j_l j_{l+1}} = 1, \forall (j_l, j_{l+1}) \in P_j, l = 1, \ldots , n-1$.

As a result, according to constraints (\ref{d:plcon0}), since $\forall (i, j) \in \mathcal{L}$, $x^k_{ij} = 1 \Rightarrow l^{s_k}_j = l^{s_k}_i + w_{ij}$,
\begin{eqnarray} \label{d:pucon1}
l^{s_k}_{t_k} = l^{s_k}_{j_n} = l^{s_k}_{j_{n-1}} + w_{j_{n-1} j_n} = l^{s_k}_{j_1} + w_{j_1 j_2} + \ldots + w_{j_{n-1} j_n} = l_{P_j}.
\end{eqnarray}

As both $P_j$ and $P_i$ are paths between $s_k$ and $t_k$, they finally merge at one node. Assume it is node $r$ and $r = j_p = i_q, p \in \{2, 3, \ldots, n-1,
n\}, q \in \{2, 3, \ldots, m-1, m\}$. Then, by the definition of the routing decision variables, $x^k_{j_{p-1} r} = 1$ and $x^k_{i_{q-1} r} = 0$.

In addition, on one hand, $\sum_{h:(h, r) \in \mathcal{L}}x^k_{hr} \le 1$. On the other hand, $\sum_{h:(h, r) \in \mathcal{L}}x^k_{hr} \ge x^k_{j_{p-1} r} +
x^k_{i_{q-1} r} = 1$. Hence, $\sum_{h:(h, r) \in \mathcal{L}}x^k_{hr} = 1$.

As a result, according to constraints (\ref{d:plcon0}),
\begin{eqnarray} \label{d:pucon2}
l^{s_k}_{t_k} & = & l^{s_k}_{i_m} \nonumber \\ & = & l^{s_k}_{i_{m-1}} + w_{i_{m-1} i_m} \nonumber \\ & = & l^{s_k}_r + w_{r i_{q+1}} + \ldots + w_{i_{m-1} i_m}
\nonumber \\ & < & l^{s_k}_{i_{q-1}} + w_{i_{q-1} r} + w_{r i_{q+1}} + \ldots + w_{i_{m-1} i_m} \nonumber \\ & \leq & l^{s_k}_{i_1} + w_{i_1 i_2}+ \ldots +
w_{i_{m-1} i_m} \nonumber \\ & = & l_{P_i}.
\end{eqnarray}

It follows that $l_{P_j} < l_{P_i}$ from (\ref{d:pucon1}) and (\ref{d:pucon2}). It is therefore proved that path $P_j$ is the unique shortest path to route demand
$k$.
\end{proof}

\begin{corollary} \label{cor:d:cor}
DBM is a correct model of the unique shortest path routing problem.
\end{corollary}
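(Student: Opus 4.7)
The plan is to assemble the preceding propositions and lemmas into a single verification that each requirement listed in the problem definition of Section 2 is captured by DBM. First, by Proposition \ref{prop:d:spcon}, the path length constraints (\ref{d:plcon}) imply the sub-path optimality constraints (\ref{d:spcon}), so DBM and DBM$^\prime$ have the same feasible region and the same objective value. It therefore suffices to verify DBM$^\prime$.

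Next, I would observe that DBM$^\prime$ extends RDBM only by the link weight variables (\ref{d:lwvar}), the path length variables (\ref{d:plvar}), and the path length constraints (\ref{d:plcon}), while keeping the same flow conservation constraints (\ref{d:fccon}), link capacity constraints (\ref{d:lccon}), sub-path optimality constraints (\ref{d:spcon}), and the same objective (\ref{d:obj}). Because RDBM is the established formulation of the integer multi-commodity flow problem with sub-path optimality, the flow conservation and link capacity requirements of Section 2 are handled directly, and Lemma \ref{lem:d:fl} together with Lemma \ref{lem:d:pucon} already guarantee that a feasible assignment of the routing decision variables produces a single well-defined routing path per demand.

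The remaining two requirements from Section 2, namely that each routing path is a shortest path and that it is the unique such path, are then supplied by Proposition \ref{prop:d:sp} and Proposition \ref{prop:d:up}, respectively, which together pin down the role of the added path length constraints (\ref{d:plcon}). Combining these with the equivalence of DBM and DBM$^\prime$, and observing that (\ref{d:obj}) is an equivalent reformulation of the original maximization of residual capacities, yields the corollary.

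The hard part of the chain is not in the corollary itself, which amounts to bookkeeping, but in Proposition \ref{prop:d:up}, where the strict-inequality implication in (\ref{d:plcon0}) must be invoked at the precise node where an alternative path rejoins the assigned routing path; that subtle use of sub-path optimality via the second implication of (\ref{d:plcon0}) is what upgrades ``shortest'' to ``uniquely shortest.'' Once all earlier results are cited, the statement of Corollary \ref{cor:d:cor} follows in a few lines.
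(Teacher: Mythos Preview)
Your proposal is correct and follows essentially the same approach as the paper: reduce to DBM$^\prime$ via Proposition~\ref{prop:d:spcon}, then invoke Propositions~\ref{prop:d:sp} and~\ref{prop:d:up} for the shortest-path and uniqueness requirements. The paper's own proof is even terser---it simply cites Propositions~\ref{prop:d:sp} and~\ref{prop:d:up} and the equivalence with DBM$^\prime$ in two lines, without re-mentioning RDBM or Lemmas~\ref{lem:d:fl} and~\ref{lem:d:pucon}, since those are already absorbed into the proof of Proposition~\ref{prop:d:up}.
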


\begin{proof}
By Proposition~\ref{prop:d:sp} and Proposition~\ref{prop:d:up}, DBM$^\prime$ is a correct model of the unique shortest path routing problem. Hence, as an
equivalent model to DBM$^\prime$, DBM is also a correct model of the problem.
\end{proof}

\subsection{Correctness of OBM}

By the proof of Lemma~\ref{lem:d:pucon}, the flow conservation constraints (\ref{d:fccon}) are identical to:
\begin{eqnarray} \label{d:fccon:}
\left. \begin{array}{rl} \displaystyle \sum_{h: (h, i) \in \mathcal{L}}x^k_{hi} = 0, \sum_{j: (i, j) \in \mathcal{L}}x^k_{ij} = 1, & \textrm{if} \ i = s_k \\
\displaystyle \sum_{h: (h, i) \in \mathcal{L}}x^k_{hi} = 1, \sum_{j: (i, j) \in \mathcal{L}}x^k_{ij} = 0, & \textrm{if} \ i = t_k \\ \displaystyle \sum_{h: (h, i)
\in \mathcal{L}}x^k_{hi} = \sum_{j: (i, j) \in \mathcal{L}}x^k_{ij}, & \textrm{otherwise} \end{array} \right\}, \forall k \in \mathcal{D}, \forall i \in
\mathcal{N}.
\end{eqnarray}
Hence, RDBM is equivalent to the following model:

\begin{center}
\textbf{RDBM$^\prime$:} $\begin{array}{l} Optimize \ \ (\ref{d:obj})
\\ Subject \ to \ (\ref{d:fccon:}), (\ref{d:lccon}), (\ref{d:spcon}),
(\ref{d:rdvar})
\end{array}$
\end{center}

\begin{lemma} \label{lem:o:fccon}
In ROBM, the flow conservation constraints at origin nodes are equivalent to:
\begin{eqnarray*} \label{o:fccon:s}
\sum_{h: (h, i) \in \mathcal{L}}f^s_{hi} = 0 \ \mathrm{and} \ \sum_{j: (i, j) \in \mathcal{L}}f^s_{ij} = \sum_{k \in \mathcal{D}_s} d_k, i = s, \forall s \in
\mathcal{S}.
\end{eqnarray*}
\end{lemma}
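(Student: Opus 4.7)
The plan is to prove the equivalence in both directions, with essentially all of the work lying in the forward direction, and with the understanding that it is the full ROBM constraint set (not the flow conservation equation alone) that enables the split. The backward direction is immediate: once $\sum_{h:(h,s)\in\mathcal{L}} f^s_{hs} = 0$ and $\sum_{j:(s,j)\in\mathcal{L}} f^s_{sj} = \sum_{k\in\mathcal{D}_s} d_k$ are assumed, subtracting one from the other recovers precisely the $i = s$ case of (\ref{o:fccon}), whose right-hand side is $-d_s = -\sum_{k\in\mathcal{D}_s} d_k$.

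For the forward direction I would proceed in three short steps. First, I would invoke the path uniqueness constraint (\ref{o:pucon}) at $i = s$, which reads $\sum_{h:(h,s)\in\mathcal{L}} y^s_{hs} = 0$; because (\ref{o:rdvar}) restricts $y^s_{hs} \in \{0,1\}$, each individual $y^s_{hs}$ must vanish. Second, I would feed this into the flow bound constraint (\ref{o:fbcon}) to obtain $f^s_{hs} \le y^s_{hs}\sum_{k\in\mathcal{D}_s} d_k = 0$; together with the non-negativity built into the variable domain (\ref{o:afvar}), this forces $f^s_{hs} = 0$ on every link incoming to $s$, so that $\sum_{h:(h,s)\in\mathcal{L}} f^s_{hs} = 0$. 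Third, I would substitute this into the $i = s$ case of the original flow conservation equation in (\ref{o:fccon}) to obtain $\sum_{j:(s,j)\in\mathcal{L}} f^s_{sj} = d_s = \sum_{k\in\mathcal{D}_s} d_k$, which completes the forward implication.

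There is no serious obstacle here. The only subtlety worth flagging is that the lemma is not a rewriting of a single constraint in isolation: the equivalence genuinely draws on (\ref{o:pucon}), (\ref{o:fbcon}), and the non-negativity clause of (\ref{o:afvar}) as auxiliary ingredients. Once these are identified, the proof reduces to the bookkeeping sketched above, and no flow-loop elimination argument parallel to Lemma~\ref{lem:d:fl} is needed, since path uniqueness at the origin kills every incoming flow directly through the flow bound constraint rather than through a rerouting construction.
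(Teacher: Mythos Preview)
Your proposal is correct and follows essentially the same approach as the paper: both arguments use the path uniqueness constraint (\ref{o:pucon}) at $i=s$ together with the flow bound constraint (\ref{o:fbcon}) and the non-negativity of (\ref{o:afvar}) to force the incoming flow at the origin to vanish, and then read off the outgoing sum from (\ref{o:fccon}). The only cosmetic differences are that you argue term-by-term (each $y^s_{hs}=0$ forces each $f^s_{hs}=0$) whereas the paper bounds the sums directly, and that you explicitly record the trivial backward direction, which the paper leaves implicit.
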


\begin{proof}
On one hand, according to (\ref{o:afvar}), if $i = s$,
\begin{eqnarray*}
\sum_{h: (h, i) \in \mathcal{L}}f^s_{hi} \ge 0, \forall s \in \mathcal{S}.
\end{eqnarray*}
On the other hand, according to (\ref{o:fbcon}) and (\ref{o:pucon}), if $i = s$,
\begin{eqnarray*}
\sum_{h: (h, i) \in \mathcal{L}}f^s_{hi} \le \sum_{h: (h, i) \in \mathcal{L}} \left(y^s_{hi} \sum_{k \in \mathcal{D}_s}d_k \right) = \sum_{k \in \mathcal{D}_s}d_k
\sum_{h: (h, i) \in \mathcal{L}}y^s_{hi} = 0, \forall s \in \mathcal{S}.
\end{eqnarray*}
Hence,
\begin{eqnarray*}
\sum_{h: (h, i) \in \mathcal{L}}f^s_{hi} = 0, i = s, \forall s \in \mathcal{S}.
\end{eqnarray*}

It can then be derived directly from (\ref{o:fccon}) that
\begin{eqnarray*}
\sum_{j: (i, j) \in \mathcal{L}}f^s_{ij} = \sum_{k \in \mathcal{D}_s} d_k, i = s, \forall s \in \mathcal{S}.
\end{eqnarray*}

The conclusion is therefore verified.
\end{proof}

By Lemma~\ref{lem:o:fccon}, the flow conservation constraints (\ref{o:fccon}) are identical to:
\begin{eqnarray} \label{o:fccon:}
\left. \begin{array}{rl} \displaystyle \sum_{h: (h, i) \in \mathcal{L}}f^s_{hi} = 0, \sum_{j: (i, j) \in \mathcal{L}}f^s_{ij} = d_s, & \textrm{if} \ i = s \\
\displaystyle \sum_{h: (h, i) \in \mathcal{L}}f^s_{hi} - \sum_{j: (i, j) \in \mathcal{L}}f^s_{ij} = d_k, & \textrm{if} \ i = t_k, \forall k \in \mathcal{D}_s \\
\displaystyle \sum_{h: (h, i) \in \mathcal{L}}f^s_{hi} - \sum_{j: (i, j) \in \mathcal{L}}f^s_{ij} = 0, & \textrm{otherwise} \end{array} \right \}, \forall s \in
\mathcal{S}, \forall i \in \mathcal{N},
\end{eqnarray}
where $d_s = \sum_{k \in \mathcal{D}_s} d_k$.

As a result, ROBM is equivalent to the following model:

\begin{center}
\textbf{ROBM$^\prime$:} $\begin{array}{l} Optimize \ \ (\ref{o:obj}) \\ Subject \ to \ (\ref{o:fccon:}), (\ref{o:fbcon}), (\ref{o:lccon}), (\ref{o:pucon}),
(\ref{o:rdvar}), (\ref{o:afvar}) \end{array}$
\end{center}

In the following, ROBM and RDBM are proved to be equivalent concerning the feasibility of the relaxed problem. The proof is heavily based on the verification of
the equivalence between the two respectively identical models, ROBM$^\prime$ and RDBM$^\prime$.

\begin{proposition} \label{prop:feasible:dtoo}
There is a solution to ROBM if RDBM is feasible.
\end{proposition}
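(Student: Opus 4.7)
The plan is to exhibit an explicit feasible solution to ROBM constructed from a feasible solution to RDBM. Since RDBM has a non-empty feasible region (by hypothesis) and a finite integer domain, it attains an optimum $x^{*k}_{ij}$, which is loop-free by Lemma~\ref{lem:d:fl}. (Alternatively, any feasible $x^k_{ij}$ may be turned into a loop-free feasible solution by iteratively removing loops as in the proof of that lemma.) I then propose the aggregation
\begin{equation*}
y^s_{ij} := \max_{k \in \mathcal{D}_s} x^{*k}_{ij}, \qquad f^s_{ij} := \sum_{k \in \mathcal{D}_s} d_k\, x^{*k}_{ij},
\end{equation*}
for every $s \in \mathcal{S}$ and $(i,j) \in \mathcal{L}$, which immediately lies in the required domains (\ref{o:rdvar}) and (\ref{o:afvar}).

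The routine checks come next. Flow conservation (\ref{o:fccon}) follows by multiplying (\ref{d:fccon}) by $d_k$ and summing over $k \in \mathcal{D}_s$, using the hypothesis that at most one demand exists between any origin--destination pair, so that the right-hand side collapses to $-d_s$ at $i=s$, to $d_k$ at $i = t_k$ for the unique $k \in \mathcal{D}_s$ with $t_k = i$, and to $0$ otherwise. The flow bound constraints (\ref{o:fbcon}) are immediate from $x^{*k}_{ij} \le y^s_{ij}$, giving $f^s_{ij} \le y^s_{ij} \sum_{k \in \mathcal{D}_s} d_k$. The link capacity constraints (\ref{o:lccon}) reduce directly to (\ref{d:lccon}) via $\sum_s f^s_{ij} = \sum_{k \in \mathcal{D}} d_k x^{*k}_{ij}$.

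The main obstacle is the path uniqueness constraints (\ref{o:pucon}), whose three cases need separate treatment. For $i = s$, loop-freeness of $x^{*k}_{ij}$ combined with flow conservation (as exploited in the proof of Lemma~\ref{lem:d:pucon}) forces $x^{*k}_{hs} = 0$ for all $k \in \mathcal{D}_s$ and all $h$, hence $\sum_h y^s_{hs} = 0$. For a general $i \ne s$, the sub-path optimality constraints (\ref{d:spcon}) of RDBM yield $\sum_h y^s_{hi} = \sum_h \max_{k \in \mathcal{D}_s} x^{*k}_{hi} \le 1$. For $i = t_k$ with $k \in \mathcal{D}_s$, loop-free flow conservation forces $\sum_h x^{*k}_{hi} = 1$, so at least one $y^s_{hi}$ equals $1$; combined with the previous inequality this yields equality $\sum_h y^s_{hi} = 1$. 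Assembling the three cases verifies (\ref{o:pucon}) and completes the argument that $(y^s_{ij}, f^s_{ij})$ is feasible for ROBM.
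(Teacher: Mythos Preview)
Your proposal is correct and follows essentially the same approach as the paper: the construction $y^s_{ij} = \max_{k \in \mathcal{D}_s} x^{*k}_{ij}$, $f^s_{ij} = \sum_{k \in \mathcal{D}_s} d_k x^{*k}_{ij}$ is exactly the paper's, and the constraint-by-constraint verification proceeds along the same lines (flow conservation by linear aggregation of (\ref{d:fccon}), flow bounds from $x^{*k}_{ij} \le y^s_{ij}$, capacity directly, and path uniqueness via (\ref{d:spcon}) together with loop-freeness at the origin and destination nodes). Your explicit appeal to an optimal or de-looped solution to secure loop-freeness is, if anything, slightly more careful than the paper's tacit passage through the equivalent model RDBM$^\prime$.
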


\begin{proof}
Suppose ${x^*}_{ij}^{k}, k \in \mathcal{D}, (i, j) \in \mathcal{L}$ is a feasible solution to RDBM, and so to RDBM$^\prime$. Then, all constraints
(\ref{d:fccon:}), (\ref{d:lccon}), and (\ref{d:spcon}) are satisfied by ${x^*}_{ij}^{k}, k \in \mathcal{D}, (i, j) \in \mathcal{L}$.

Let
\begin{eqnarray} \label{d:rdvar:}
{y^*}_{ij}^{s} = \max_{k \in \mathcal{D}_s} {x^*}_{ij}^{k}, \forall s \in \mathcal{S}, \forall (i, j) \in \mathcal{L},
\end{eqnarray}
and
\begin{eqnarray} \label{d:afvar:}
{f^*}_{ij}^{s} = \sum_{k \in \mathcal{D}_s} d_k {x^*}_{ij}^{k}, \forall s \in \mathcal{S}, \forall (i, j) \in \mathcal{L}.
\end{eqnarray}

Obviously, ${y^*}_{ij}^{s} \in \{0, 1\}$ and ${f^*}_{ij}^{s} \in [0, +\infty)$, $\forall s \in \mathcal{S}, \forall (i, j) \in \mathcal{L}$.

According to (\ref{d:fccon:}), $\forall k \in \mathcal{D}$, if $i = s_k$, $\sum_{h: (h, i) \in \mathcal{L}}{x^*}^k_{hi} = 0$ and $\sum_{j: (i, j) \in
\mathcal{L}}{x^*}^k_{ij} = 1$. Then, according to (\ref{d:afvar:}), $\forall s \in \mathcal{S}$, if $i = s$,
\begin{eqnarray*}
\sum_{h: (h, i) \in \mathcal{L}} {f^*}_{hi}^{s} = \sum_{h: (h, i) \in \mathcal{L}} \sum_{k \in \mathcal{D}_s} d_k {x^*}_{hi}^{k} = \sum_{k \in \mathcal{D}_s} d_k
\sum_{h: (h, i) \in \mathcal{L}} {x^*}_{hi}^{k} = 0,
\end{eqnarray*}
and
\begin{eqnarray*}
\sum_{j: (i, j) \in \mathcal{L}} {f^*}_{ij}^{s} = \sum_{j: (i, j) \in \mathcal{L}} \sum_{k \in \mathcal{D}_s} d_k {x^*}_{ij}^{k} = \sum_{k \in \mathcal{D}_s} d_k
\sum_{j: (i, j) \in \mathcal{L}} {x^*}_{ij}^{k} = \sum_{k \in \mathcal{D}_s} d_k.
\end{eqnarray*}
Hence, at original nodes, constraints (\ref{o:fccon:}) in ROBM$^\prime$ are satisfied by ${f^*}_{ij}^{s}, s \in \mathcal{S}, (i, j) \in \mathcal{L}$.

Also, according to (\ref{d:fccon:}), $\forall k_l \in \mathcal{D}$, if $i = t_{k_l}$, $\sum_{h: (h, i) \in \mathcal{L}}{x^*}^{k_l}_{hi} - \sum_{j: (i,j) \in
\mathcal{L}}{x^*}^{k_l}_{ij} = 1$ and $\forall k \in \mathcal{D}_{s_{k_l}}, k \ne k_l$, $\sum_{h: (h, i) \in \mathcal{L}}{x^*}^k_{hi} - \sum_{j: (i,j) \in
\mathcal{L}}{x^*}^k_{ij} = 0$. Then, according to (\ref{d:afvar:}), $\forall k_l \in \mathcal{D}$, if $s = s_{k_l}$ and $i = t_{k_l}$,
\begin{eqnarray*}
\sum_{h: (h, i) \in \mathcal{L}} {f^*}_{hi}^{s} - \sum_{j: (i, j) \in \mathcal{L}} {f^*}_{ij}^{s} & = & \sum_{h: (h, i) \in \mathcal{L}} \sum_{k \in
\mathcal{D}_s} d_k {x^*}_{hi}^{k} - \sum_{j: (i, j) \in \mathcal{L}} \sum_{k \in \mathcal{D}_s} d_k {x^*}_{ij}^{k} \\ & = & \sum_{k \in \mathcal{D}_s} \left(d_k
\sum_{h: (h, i) \in \mathcal{L}} {x^*}_{hi}^{k}\right) - \sum_{k \in \mathcal{D}_s} \left(d_k \sum_{j: (i, j) \in \mathcal{L}} {x^*}_{ij}^{k}\right) \\ & = &
\sum_{k \in \mathcal{D}_s} d_k \left(\sum_{h: (h, i) \in \mathcal{L}} {x^*}_{hi}^{k} - \sum_{j: (i, j) \in \mathcal{L}} {x^*}_{ij}^{k} \right) \\ & = & d_{k_l}
\left(\sum_{h: (h, i) \in \mathcal{L}} {x^*}_{hi}^{k_l} - \sum_{j: (i, j) \in \mathcal{L}} {x^*}_{ij}^{k_l} \right) \\ & & + \sum_{k \in \mathcal{D}_s, k \ne k_l}
d_k \left(\sum_{h: (h, i) \in \mathcal{L}} {x^*}_{hi}^{k} - \sum_{j: (i, j) \in \mathcal{L}} {x^*}_{ij}^{k} \right) \\ & = & d_{k_l}.
\end{eqnarray*}
Hence, at destination nodes, constraints (\ref{o:fccon:}) in ROBM$^\prime$ are satisfied by ${f^*}_{ij}^{s}, s \in \mathcal{S}, (i, j) \in \mathcal{L}$.

Similarly, according to (\ref{d:fccon:}), $\forall k \in \mathcal{D}$, if $i \ne s_k$ and $i \ne t_k$, $\sum_{h: (h, i) \in \mathcal{L}}{x^*}^k_{hi} - \sum_{j:
(i, j) \in \mathcal{L}}{x^*}^k_{ij} = 0$. Then, according to (\ref{d:afvar:}), $\forall s \in \mathcal{D}$, $\forall i \in \mathcal{N}$, if $i \ne s$ and $i
\notin \mathcal{D}_s$,
\begin{eqnarray*}
\sum_{h: (h, i) \in \mathcal{L}} {f^*}_{hi}^{s} - \sum_{j: (i, j) \in \mathcal{L}} {f^*}_{ij}^{s} & = & \sum_{h: (h, i) \in \mathcal{L}} \sum_{k \in
\mathcal{D}_s} d_k {x^*}_{hi}^{k} - \sum_{j: (i, j) \in \mathcal{L}} \sum_{k \in \mathcal{D}_s} d_k {x^*}_{ij}^{k} \\ & = & \sum_{k \in \mathcal{D}_s} \left(d_k
\sum_{h: (h, i) \in \mathcal{L}} {x^*}_{hi}^{k}\right) - \sum_{k \in \mathcal{D}_s} \left(d_k \sum_{j: (i, j) \in \mathcal{L}} {x^*}_{ij}^{k}\right) \\ & = &
\sum_{k \in \mathcal{D}_s} d_k \left(\sum_{h: (h, i) \in \mathcal{L}} {x^*}_{hi}^{k} - \sum_{j: (i, j) \in \mathcal{L}} {x^*}_{ij}^{k} \right) \\ & = & 0.
\end{eqnarray*}
Hence, at other nodes, constraints (\ref{o:fccon:}) in ROBM$^\prime$ are satisfied by ${f^*}_{ij}^{s}, s \in \mathcal{S}, (i, j) \in \mathcal{L}$.

According to (\ref{d:rdvar:}) and (\ref{d:afvar:}), $\forall s \in \mathcal{S}, \forall (i, j) \in \mathcal{L}$,
\begin{eqnarray*}
{f^*}_{ij}^{s} = \sum_{k \in \mathcal{D}_s} d_k {x^*}_{ij}^{k} \le \sum_{k \in \mathcal{D}_s} d_k \max_{k \in \mathcal{D}_s} {x^*}_{ij}^{k} = \max_{k \in
\mathcal{D}_s} {x^*}_{ij}^{k} \sum_{k \in \mathcal{D}_s} d_k = {y^*}_{ij}^{s} \sum_{k \in \mathcal{D}_s} d_k.
\end{eqnarray*}
Constraints (\ref{o:fbcon}) in ROBM$^\prime$ are then satisfied by ${y^*}_{ij}^{s}, {f^*}_{ij}^{s}, s \in \mathcal{S}, (i, j) \in \mathcal{L}$.

According to (\ref{d:afvar:}) and (\ref{d:lccon}), $\forall (i, j) \in \mathcal{L}$,
\begin{eqnarray*}
\sum_{s \in \mathcal{S}} {f^*}_{ij}^{s} = \sum_{s \in \mathcal{S}} \sum_{k \in \mathcal{D}_s} d_k {x^*}_{ij}^{k} = \sum_{k \in \mathcal{D}} d_k {x^*}_{ij}^{k} \le
c_{ij}.
\end{eqnarray*}
Constraints (\ref{o:lccon}) in ROBM$^\prime$ are thus satisfied by ${f^*}_{ij}^{s}, s \in \mathcal{S}, (i, j) \in \mathcal{L}$.

According to (\ref{d:fccon:}), $\forall k \in \mathcal{D}$, if $i = s_k$, $\sum_{h: (h, i) \in \mathcal{L}}{x^*}^k_{hi} = 0$ and so $\forall k \in \mathcal{D}$,
$\forall (h, i) \in \mathcal{L}$, if $i = s_k$, ${x^*}^k_{h i} = 0$. Then, according to (\ref{d:rdvar:}), $\forall s \in \mathcal{S}$, if $i = s$,
\begin{eqnarray*}
\sum_{h: (h, i) \in \mathcal{L}} {y^*}_{hi}^{s} = \sum_{h: (h, i) \in \mathcal{L}} \max_{k \in \mathcal{D}_s} {x^*}_{hi}^{k} = \sum_{h: (h, i) \in \mathcal{L}}
\max_{k \in \mathcal{D}_s} 0 = 0.
\end{eqnarray*}
Hence, at original nodes, constraints (\ref{o:pucon}) in ROBM$^\prime$ are satisfied by ${y^*}_{ij}^{s}, s \in \mathcal{S}, (i, j) \in \mathcal{L}$.

Also, according to (\ref{d:fccon:}), $\forall k_l \in \mathcal{D}$, if $i = t_{k_l}$, $\sum_{h: (h, i) \in \mathcal{L}}{x^*}^{k_l}_{hi} = 1$. Then, according to
(\ref{d:rdvar:}), $\forall k_l \in \mathcal{D}$, if $s = s_{k_l}$ and $i = t_{k_l}$,
\begin{eqnarray*}
\sum_{h: (h, i) \in \mathcal{L}} {y^*}_{hi}^{s} = \sum_{h: (h, i) \in \mathcal{L}} \max_{k \in \mathcal{D}_s} {x^*}_{hi}^{k} \ge \sum_{h: (h, i) \in \mathcal{L}}
{x^*}_{hi}^{k_l} = 1.
\end{eqnarray*}
In addition, according to (\ref{d:spcon}), $\forall k_l \in \mathcal{D}$, if $i = t_{k_l}$, $\sum_{h:(h, i) \in \mathcal{L}} \max_{k \in \mathcal{D}_{s_{k_l}}}
{x^*}_{hi}^k \le 1$. Then, according to (\ref{d:rdvar:}), $\forall k_l \in \mathcal{D}$, if $s = s_{k_l}$ and $i = t_{k_l}$,
\begin{eqnarray*}
\sum_{h: (h, i) \in \mathcal{L}} {y^*}_{hi}^{s} = \sum_{h: (h, i) \in \mathcal{L}} \max_{k \in \mathcal{D}_s} {x^*}_{hi}^{k} \le 1.
\end{eqnarray*}
Thus, $\forall k \in \mathcal{D}$, if $s = s_k$ and $i = t_k$,
\begin{eqnarray*}
\sum_{h: (h, i) \in \mathcal{L}} {y^*}_{hi}^{s} = 1.
\end{eqnarray*}
Hence, at destination nodes, constraints (\ref{o:pucon}) in ROBM$^\prime$ are satisfied by ${y^*}_{ij}^{s}, s \in \mathcal{S}, (i, j) \in \mathcal{L}$.

Similarly, according to (\ref{d:spcon}), $\forall s \in \mathcal{S}, \forall i \in \mathcal{N}$, if $i \ne s$, $\sum_{h:(h, i) \in \mathcal{L}} \max_{k \in
\mathcal{D}_s} {x^*}_{hi}^k \le 1$. Then, according to (\ref{d:rdvar:}), $\forall s \in \mathcal{S}, \forall i \in \mathcal{N}$, if $i \ne s$ and $i \notin
\mathcal{T}_s$,
\begin{eqnarray*}
\sum_{h: (h, i) \in \mathcal{L}} {y^*}_{hi}^{s} = \sum_{h: (h, i) \in \mathcal{L}} \max_{k \in \mathcal{D}_s} {x^*}_{hi}^{k} \le 1.
\end{eqnarray*}
Hence, at other nodes, constraints (\ref{o:pucon}) in ROBM$^\prime$ are satisfied by ${y^*}_{ij}^{s}, s \in \mathcal{S}, (i, j) \in \mathcal{L}$.

Since all constraints (\ref{o:fccon:}), (\ref{o:fbcon}), (\ref{o:lccon}), and (\ref{o:pucon}) in ROBM$^\prime$ are satisfied by ${y^*}_{ij}^{s}$ and
${f^*}_{ij}^{s}, s \in \mathcal{S}, (i, j) \in \mathcal{L}$, it is a corresponding feasible solution to ROBM$^\prime$ and also to ROBM, of the feasible solution
to RDBM, ${x^*}_{ij}^{k}, k \in \mathcal{D}, (i, j) \in \mathcal{L}$.
\end{proof}

\begin{proposition} \label{prop:feasible:otod}
There is a solution to RDBM if ROBM is feasible.
\end{proposition}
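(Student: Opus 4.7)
The plan is to mirror the construction used in Proposition~\ref{prop:feasible:dtoo} but run it in the reverse direction. Given a feasible $\{{y^*}_{ij}^{s}, {f^*}_{ij}^{s}\}$ for ROBM, and hence for ROBM$^\prime$, I would build a feasible routing decision vector ${x^*}_{ij}^{k}$ for RDBM (equivalently for RDBM$^\prime$) by decomposing each per-origin flow tree into per-demand paths. First, for every $s \in \mathcal{S}$, consider the subgraph $G^s = \{(i,j) \in \mathcal{L} : {y^*}_{ij}^{s} = 1\}$. By the path uniqueness constraints (\ref{o:pucon}), every node distinct from $s$ has in-degree at most one in $G^s$, exactly one at each $t_k \in \mathcal{T}_s$, and zero at $s$. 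Combined with the flow bound constraints (\ref{o:fbcon}), which force ${f^*}_{ij}^{s} = 0$ whenever ${y^*}_{ij}^{s} = 0$, and the flow conservation constraints (\ref{o:fccon:}), this implies that for every $k \in \mathcal{D}_s$ there is a unique simple path $P_k$ in $G^s$ from $s$ to $t_k$, obtained by iteratively following the unique in-edge backward from $t_k$ (the trace-back cannot dead-end at a non-source node, since conservation would force such a node to have both in-flow zero and out-flow positive, a contradiction).

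Next, I would set ${x^*}_{ij}^{k} = 1$ if $(i,j) \in P_k$ and ${x^*}_{ij}^{k} = 0$ otherwise. Flow conservation (\ref{d:fccon}) for each demand then holds automatically because $P_k$ is a simple $s_k$-to-$t_k$ path. The sub-path optimality constraints (\ref{d:spcon}) follow because, for each $s$, all of the paths $\{P_k : k \in \mathcal{D}_s\}$ live inside $G^s$, so at any node $i \ne s$ at most one $h$ can satisfy ${x^*}_{hi}^{k} = 1$ for some $k \in \mathcal{D}_s$, which bounds the sum in (\ref{d:spcon}) by one.

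The most delicate step, and the one I expect to be the main obstacle, is the verification of the link capacity constraints (\ref{d:lccon}). The key intermediate claim is $\sum_{k \in \mathcal{D}_s} d_k {x^*}_{ij}^{k} \le {f^*}_{ij}^{s}$ for every $s$ and $(i,j)$. To prove it, I would look at the subtree $T^s = \bigcup_{k \in \mathcal{D}_s} P_k \subseteq G^s$, which is a tree rooted at $s$ spanning $\mathcal{T}_s$ (a tree because each node on it inherits in-degree at most one from $G^s$ and the whole set is reachable from $s$). On any edge $(i,j) \in T^s$, the flow conservation equations of ROBM$^\prime$, combined with the fact that $(i,j)$ is the only incoming link of $j$ carrying any flow, force ${f^*}_{ij}^{s}$ to equal exactly the total $d_k$ of destinations $t_k$ in the subtree below $j$, which is precisely $\sum_{k \in \mathcal{D}_s} d_k {x^*}_{ij}^{k}$. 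On edges of $G^s \setminus T^s$ and on links with ${y^*}_{ij}^{s} = 0$, the right-hand side vanishes while ${f^*}_{ij}^{s} \ge 0$. Summing the inequality over $s$ and applying (\ref{o:lccon}) finally yields $\sum_{k \in \mathcal{D}} d_k {x^*}_{ij}^{k} \le \sum_{s \in \mathcal{S}} {f^*}_{ij}^{s} \le c_{ij}$, completing the verification that ${x^*}_{ij}^{k}$ is a feasible solution to RDBM.
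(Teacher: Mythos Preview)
Your proposal is correct and follows essentially the same approach as the paper: both construct ${x^*}_{ij}^{k}$ by tracing back from each destination $t_k$ along the unique incoming edges with ${y^*}^{s_k}=1$, and then verify the constraints of RDBM$^\prime$. Your treatment of the link-capacity step is in fact more careful than the paper's, which simply asserts $\sum_{k \in \mathcal{D}_s} d_k {x^*}_{ij}^{k} \le {f^*}_{ij}^{s}$ by citing flow conservation, whereas you justify it via the tree structure of $T^s$; conversely, the paper is slightly more explicit than you are about why the trace-back cannot stall before reaching $s$ (it tracks that the in-flow at every intermediate node is at least $d_k$, which is the inductive content behind your ``out-flow positive'' claim).
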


\begin{proof}
Suppose ${y^*}_{ij}^{s}, {f^*}_{ij}^{s}, s \in \mathcal{S}, (i, j) \in \mathcal{L}$ is a feasible solution to ROBM, and so to ROBM$^\prime$. Then, all constraints
(\ref{o:fccon:}), (\ref{o:fbcon}), (\ref{o:lccon}), and (\ref{o:pucon}) are satisfied by ${y^*}_{ij}^{s}, {f^*}_{ij}^{s}, s \in \mathcal{S}, (i, j) \in
\mathcal{L}$.

Let
\begin{eqnarray*}
{x^*}_{h t_k}^{k} = {y^*}_{h t_k}^{s_k}, \forall k \in \mathcal{D}, \forall (h, t_k) \in \mathcal{L}.
\end{eqnarray*}
According to constraints (\ref{o:pucon}) at destination nodes, $\sum_{h: (h, t_k) \in \mathcal{L}} {y^*}_{h t_k}^{s_k} = 1, \forall k \in \mathcal{D}$. Hence,
$\forall k \in \mathcal{D}, \exists h: (h, t_k) \in \mathcal{L}$, such that ${y^*}_{h t_k}^{s_k} = 1$.

$\forall k \in \mathcal{D}, \forall (i, j) \in \mathcal{L}$, if $(i, j) \ne (i, t_k)$, ${x^*}_{ij}^{k}$ is assigned as follows:
\begin{eqnarray}\label{assigning}
\begin{array}{lll} \textbf{Initialize} & {x^*}_{ij}^{k} \gets 0, & \forall k \in \mathcal{D}, \forall (i, j) \in \mathcal{L}, (i, j) \ne (i, t_k) \\
\textbf{For} & k \in \mathcal{D} \\ & \textbf{0} & i \gets t_k \\ & \textbf{Do} & \textrm{find} \ h: (h, i) \in \mathcal{L}, \ \textrm{such that} \
{y^*}_{hi}^{s_k} = 1 \\ & & {x^*}_{hi}^{k} \gets 1 \\ & & i \gets h \\ & \textbf{Until} & i = s_k \end{array}
\end{eqnarray}

In the above assigning process (\ref{assigning}), at node $i$ in each iteration of the inner loop, on one hand, according to (\ref{o:fccon:}),
\begin{eqnarray*}
\sum_{h: (h, i) \in \mathcal{L}} {f^*}_{hi}^{s_k} \ge \sum_{j: (t_k, j) \in \mathcal{L}} {f^*}_{t_k j}^{s_k} + d_k \ge d_k > 0, \forall k \in \mathcal{D},
\end{eqnarray*}
and according to (\ref{o:fbcon}),
\begin{eqnarray*}
\sum_{h: (h, i) \in \mathcal{L}} {f^*}_{hi}^{s_k} \le \sum_{h: (h, i) \in \mathcal{L}} \left({y^*}_{hi}^{s_k} \sum_{k^\prime \in \mathcal{D}_{s_k}} d_{k^\prime}
\right) = \sum_{k^\prime \in \mathcal{D}_{s_k}} d_{k^\prime} \sum_{h: (h, i) \in \mathcal{L}} {y^*}_{hi}^{s_k}, \forall k \in \mathcal{D}.
\end{eqnarray*}
Hence,
\begin{eqnarray*}
\sum_{h: (h, i) \in \mathcal{L}} {y^*}_{hi}^{s_k} > 0, \forall k \in \mathcal{D}.
\end{eqnarray*}
On the other hand, according to (\ref{o:pucon}),
\begin{eqnarray*}
\sum_{h: (h, i) \in \mathcal{L}} {y^*}_{hi}^{s_k} \le 1, \forall k \in \mathcal{D}.
\end{eqnarray*}
Then, at node $i$ in each iteration of the inner loop,
\begin{eqnarray*}
\sum_{h: (h, i) \in \mathcal{L}} {y^*}_{hi}^{s_k} = 1, \forall k \in \mathcal{D}.
\end{eqnarray*}
Therefore, at node $i$ in each iteration of the inner loop, $\exists h: (h, i) \in \mathcal{L}$, such that ${x^*}_{hi}^{k} = {y^*}_{hi}^{s_k} = 1$. Moreover,
$\forall k \in \mathcal{D}$, according to (\ref{o:fccon:}), the process terminates at node $s_k$.

According to (\ref{assigning}), obviously,
\begin{eqnarray*}
{x^*}_{ij}^{k} \in \{0, 1\} \ \textrm{and} \ {x^*}_{ij}^{k} \le {y^*}_{ij}^{s_k}, \forall k \in \mathcal{D}, \forall (i, j) \in \mathcal{L}.
\end{eqnarray*}

Then, according to (\ref{o:pucon}) at original nodes, if $i = s_k$,

\begin{eqnarray*}
\sum_{h: (h, i) \in \mathcal{L}} {x^*}_{hi}^{k} \le \sum_{h: (h, i) \in \mathcal{L}} {y^*}_{hi}^{s_k} = 0, \forall k \in \mathcal{D}.
\end{eqnarray*}
In addition, the assigning process terminates at node $s_k$, $\forall k \in \mathcal{D}$, and so if $i = s_k$,
\begin{eqnarray*}
\sum_{j: (i, j) \in \mathcal{L}} {x^*}_{ij}^{k} = 1, \forall k \in \mathcal{D}.
\end{eqnarray*}
Hence, at original nodes, constraints (\ref{d:fccon:}) in RDBM$^\prime$ are satisfied by ${x^*}_{ij}^{k}, k \in \mathcal{D}, (i, j) \in \mathcal{L}$.

According to (\ref{o:pucon}) at destination nodes, if $i = t_k$,
\begin{eqnarray*}
\sum_{h: (h, i) \in \mathcal{L}} {x^*}_{hi}^{k} = \sum_{h: (h, i) \in \mathcal{L}} {y^*}_{hi}^{s_k} = 1, \forall k \in \mathcal{D}.
\end{eqnarray*}
Furthermore, according to (\ref{assigning}), if $i = t_k$, ${x^*}_{ij}^{k} = 0$, $\forall k \in \mathcal{D}, \forall (i, j) \in \mathcal{L}$. Then, if $i = t_k$,
\begin{eqnarray*}
\sum_{j: (i, j) \in \mathcal{L}} {x^*}_{ij}^{k} = 0, \forall k \in \mathcal{D}.
\end{eqnarray*}
Hence, at destination nodes, constraints (\ref{d:fccon:}) in RDBM$^\prime$ are satisfied by ${x^*}_{ij}^{k}, k \in \mathcal{D}, (i, j) \in \mathcal{L}$.

According to (\ref{assigning}), at node $i$ in each iteration of the inner loop,
\begin{eqnarray*}
\sum_{h: (h, i) \in \mathcal{L}} {x^*}_{hi}^{k} = \sum_{j: (i, j) \in \mathcal{L}} {x^*}_{ij}^{k} = 1, \forall k \in \mathcal{D},
\end{eqnarray*}
and at any other node $i^\prime \in \mathcal{N}, i^\prime \ne s_k, i^\prime \ne t_k$,
\begin{eqnarray*}
\sum_{h: (h, i^\prime) \in \mathcal{L}} {x^*}_{hi^\prime}^{k} = \sum_{j: (i^\prime, j) \in \mathcal{L}} {x^*}_{i^\prime j}^{k} = 0, \forall k \in \mathcal{D}.
\end{eqnarray*}
Then, if $i \ne s_k, i \ne t_k$,
\begin{eqnarray*}
\sum_{h: (h, i) \in \mathcal{L}} {x^*}_{hi}^{k} - \sum_{j: (i, j) \in \mathcal{L}} {x^*}_{ij}^{k} = 0, \forall k \in \mathcal{D}.
\end{eqnarray*}
Hence, at other nodes, constraints (\ref{d:fccon:}) in RDBM$^\prime$ are satisfied by ${x^*}_{ij}^{k}, k \in \mathcal{D}, (i, j) \in \mathcal{L}$.

According to (\ref{o:fccon:}) and (\ref{o:lccon}),
\begin{eqnarray*}
\sum_{k \in \mathcal{D}} d_k {x^*}_{ij}^{k} = \sum_{s \in \mathcal{S}} \sum_{k \in \mathcal{D}_s} d_k {x^*}_{ij}^{k} \le \sum_{s \in \mathcal{S}} {f^*}_{ij}^{s}
\le c_{ij}, \forall (i, j) \in \mathcal{L}.
\end{eqnarray*}
Hence, constraints (\ref{d:lccon}) in RDBM$^\prime$ are satisfied by ${x^*}_{ij}^{k}, k \in \mathcal{D}, (i, j) \in \mathcal{L}$.

According to (\ref{assigning}), ${x^*}_{ij}^{k} \le {y^*}_{ij}^{s_k}$, $\forall k \in \mathcal{D}, \forall (i, j) \in \mathcal{L}$. Then, according to
(\ref{o:pucon}), if $i \ne s$,
\begin{eqnarray*}
\sum_{h: (h, i) \in \mathcal{L}} \max_{k \in \mathcal{D}_s} {x^*}_{hi}^{k} \le \sum_{h: (h, i) \in \mathcal{L}} \max_{k \in \mathcal{D}_s} {y^*}_{hi}^{s_k} =
\sum_{h: (h, i) \in \mathcal{L}} {y^*}_{hi}^{s} \le 1, \forall s \in \mathcal{S}, \forall i \in \mathcal{N}.
\end{eqnarray*}
Hence, constraints (\ref{d:spcon}) in RDBM$^\prime$ are satisfied by ${x^*}_{ij}^{k}, k \in \mathcal{D}, (i, j) \in \mathcal{L}$.

Since all constraints (\ref{d:fccon:}), (\ref{d:lccon}), and (\ref{d:spcon}) in RDBM$^\prime$ are satisfied by ${x^*}_{ij}^{k}, k \in \mathcal{D}, (i, j) \in
\mathcal{L}$, it is a corresponding feasible solution to RDBM$^\prime$, and so to RDBM, of the feasible solution to ROBM, ${y^*}_{ij}^{s}, {f^*}_{ij}^{s}, s \in
\mathcal{S}, (i, j) \in \mathcal{L}$.
\end{proof}

\begin{theorem} \label{thm:feasible:equ}
ROBM and RDBM are equivalent concerning the feasibility of the relaxed problem.
\end{theorem}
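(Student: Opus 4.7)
The plan is to observe that Theorem~\ref{thm:feasible:equ} is essentially an immediate consequence of Proposition~\ref{prop:feasible:dtoo} and Proposition~\ref{prop:feasible:otod}, so the work is largely bookkeeping. To establish equivalence with respect to feasibility, I need to show that RDBM admits a feasible solution if and only if ROBM admits one, and this is precisely the conjunction of the two preceding propositions.

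First, I would fix the interpretation of ``equivalence concerning the feasibility of the relaxed problem'' as the biconditional: RDBM is feasible $\Leftrightarrow$ ROBM is feasible. Then I would apply Proposition~\ref{prop:feasible:dtoo} to obtain the forward implication, noting that its proof actually provides an explicit construction of the ROBM solution $({y^*}_{ij}^{s}, {f^*}_{ij}^{s})$ from any feasible RDBM solution ${x^*}_{ij}^{k}$ via the rules ${y^*}_{ij}^{s} = \max_{k \in \mathcal{D}_s} {x^*}_{ij}^{k}$ and ${f^*}_{ij}^{s} = \sum_{k \in \mathcal{D}_s} d_k {x^*}_{ij}^{k}$. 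Next, I would apply Proposition~\ref{prop:feasible:otod} for the reverse implication, which in turn supplies an explicit path-tracing procedure recovering a feasible ${x^*}_{ij}^{k}$ from any feasible $({y^*}_{ij}^{s}, {f^*}_{ij}^{s})$.

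Since both directions yield constructive correspondences between feasible solutions of the two models, concatenating them gives the desired biconditional and hence the theorem. There is essentially no genuine obstacle here, as all the nontrivial mapping work, constraint checking, and termination arguments have already been carried out in the two propositions; the only care needed is to state the correspondence symmetrically and to invoke the two results with the correct direction of implication.
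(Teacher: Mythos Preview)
Your proposal is correct and matches the paper's own proof, which simply states that the conclusion is derived directly from Proposition~\ref{prop:feasible:dtoo} and Proposition~\ref{prop:feasible:otod}. Your additional remarks about the explicit constructions in each direction are accurate elaborations but not required, since those details are already contained in the proofs of the two propositions.
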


\begin{proof}
The conclusion is derived directly from Proposition~\ref{prop:feasible:dtoo} and Proposition~\ref{prop:feasible:otod}.
\end{proof}

Based on the proof of the equivalence between ROBM and RDBM, the equivalence between OBM and DBM, concerning the feasibility of the unique shortest path routing
problem, is verified as follows.

\begin{proposition} \label{prop:plcon:dtoo}
There is a corresponding solution satisfying the path length constraints in OBM, for each solution satisfying the path length constraints in DBM.
\end{proposition}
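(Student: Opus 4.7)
The plan is to construct, from a DBM solution $(x^k_{ij}, w_{ij}, l^s_i)$ satisfying the DBM path length constraints, an OBM candidate $(y^s_{ij}, f^s_{ij}, w_{ij}, l^s_i)$ via the same mapping used in Proposition~\ref{prop:feasible:dtoo}, namely
\begin{eqnarray*}
y^s_{ij} = \max_{k \in \mathcal{D}_s} x^k_{ij}, \quad f^s_{ij} = \sum_{k \in \mathcal{D}_s} d_k x^k_{ij}, \quad \forall s \in \mathcal{S}, \forall (i, j) \in \mathcal{L},
\end{eqnarray*}
while keeping the link weight variables $w_{ij}$ and path length variables $l^s_i$ unchanged. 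Because the $l^s_i$ are shared between the two models, the essential task is to verify that the three logical implications of (\ref{o:plcon0}) all hold under this correspondence, working from the corresponding implications (\ref{d:plcon0}) in DBM.

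The verification proceeds by case analysis on the antecedent of each implication in (\ref{o:plcon0}). For the equality case, I would observe that $y^s_{ij} = 1$ forces the existence of some $k \in \mathcal{D}_s$ with $x^k_{ij} = 1$; then (\ref{d:plcon0}) applied to this $k$ gives $l^{s_k}_j = l^{s_k}_i + w_{ij}$, which, since $s = s_k$, is exactly $l^s_j = l^s_i + w_{ij}$. For the weak inequality case, $y^s_{ij} = 0$ combined with $\sum_h y^s_{hj} = 0$ means that $x^k_{ij} = 0$ and $\sum_h x^k_{hj} = 0$ for every $k \in \mathcal{D}_s$; choosing any such $k$ (which exists since $s \in \mathcal{S}$ implies $\mathcal{D}_s \neq \emptyset$) and invoking (\ref{d:plcon0}) yields $l^s_j \le l^s_i + w_{ij}$.

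The most delicate case is the strict inequality: $y^s_{ij} = 0$ and $\sum_{h} y^s_{hj} = 1$. Here I would argue that $\sum_h y^s_{hj} = 1$ produces a unique $h^*$ with $y^s_{h^*j} = 1$, hence some witness $k^* \in \mathcal{D}_s$ with $x^{k^*}_{h^*j} = 1$; the key subtlety is to show that for this particular $k^*$, one has $\sum_h x^{k^*}_{hj} = 1$ exactly (not $> 1$). This follows because for any $h \neq h^*$ we have $y^s_{hj} = 0$, which forces $x^{k^*}_{hj} = 0$ through the max relation. Combined with $y^s_{ij} = 0 \Rightarrow x^{k^*}_{ij} = 0$, the second implication of (\ref{d:plcon0}) then delivers $l^{s_{k^*}}_j < l^{s_{k^*}}_i + w_{ij}$, which is the desired strict inequality $l^s_j < l^s_i + w_{ij}$.

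I expect the main obstacle to be stating precisely the chain of implications in the strict-inequality case, since it requires extracting a single demand $k^*$ from the max and then confirming that with respect to $k^*$ the hypothesis of the corresponding DBM implication is exactly met (i.e., the unique incoming $x^{k^*}_{\cdot j}$ at $j$ is on link $(h^*, j)$). Once this case is handled cleanly, the other two cases are straightforward applications of (\ref{d:plcon0}) together with the definition of $y^s_{ij}$, so the whole argument reduces to a short case-by-case verification.
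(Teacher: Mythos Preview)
Your proposal is correct and follows essentially the same approach as the paper: both use the mapping $y^s_{ij} = \max_{k \in \mathcal{D}_s} x^k_{ij}$ (with the same $w_{ij}$ and $l^s_i$) and proceed by a three-way case split, with the only cosmetic difference being that the paper indexes the cases by the values of the $x^k$ variables and compares the linearized constraints (\ref{d:plcon})/(\ref{o:plcon}) directly, whereas you index the cases by the antecedents of (\ref{o:plcon0}) and invoke the logical form (\ref{d:plcon0}). Your handling of the strict-inequality case --- extracting a single witness $k^*$ and using $y^s_{hj}=0$ for $h\neq h^*$ to force $\sum_h x^{k^*}_{hj}=1$ --- is exactly the substantive step, and it matches what the paper does (the paper phrases it via the sub-path optimality constraint, which is the same information).
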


\begin{proof}
Suppose ${x^*}_{ij}^{k}, k \in \mathcal{D}, (i, j) \in \mathcal{L}$, ${w^*}_{ij}, (i, j) \in \mathcal{L}$, and ${l^*}_{i}^{s}, s\in \mathcal{S}, i \in
\mathcal{N}$ is a feasible solution to DBM and also to DBM$^\prime$ by Proposition~\ref{prop:d:spcon}. Since RDBM is a relaxation of DBM$^\prime$,
${x^*}_{ij}^{k}, k \in \mathcal{D}, (i, j) \in \mathcal{L}$ is then a feasible solution to RDBM and satisfies all corresponding constraints.

Let
\begin{eqnarray*}
{y^*}_{ij}^{s} = \max_{k \in \mathcal{D}_s} {x^*}_{ij}^{k}, \forall s \in \mathcal{S}, \forall (i, j) \in \mathcal{L}.
\end{eqnarray*}
Obviously, ${y^*}_{ij}^{s} \in \{0, 1\}, \forall s \in \mathcal{S}, \forall (i, j) \in \mathcal{L}$. Also, $\forall s \in \mathcal{S}, \forall (i, j) \in
\mathcal{L}$, there are three cases:
\begin{itemize}
\item Case 1: ${x^*}_{ij}^{k} = 0$ and $\sum_{h: (h, j) \in \mathcal{L}} {x^*}_{hj}^{k} = 0$, $\forall k \in \mathcal{D}_s$;
\item Case 2: ${x^*}_{ij}^{k} = 0$, $\forall k \in \mathcal{D}_s$ and $\exists k_l \in \mathcal{D}_s, \sum_{h: (h, j) \in \mathcal{L}} {x^*}_{hj}^{k_l} = 1$;
\item Case 3: $\exists k_l \in \mathcal{D}_s, {x^*}_{ij}^{k_l} = 1$.
\end{itemize}

For Case 1, on one hand, $\forall k \in \mathcal{D}_s$, constraints (\ref{d:plcon}) can be simplified as follows:
\begin{eqnarray*}
l^{s_k}_j \le l^{s_k}_i + w_{ij} \quad \mathrm{and} \quad l^{s_k}_j \ge l^{s_k}_i + w_{ij} - M.
\end{eqnarray*}
On the other hand, since ${y^*}_{ij}^{s} = \max_{k \in \mathcal{D}_s} {x^*}_{ij}^{k} = 0$ and $\sum_{h: (h, j) \in \mathcal{L}} {y^*}_{hj}^{s} = 0$, constraints
(\ref{o:plcon}) can be simplified as follows:
\begin{eqnarray*}
l^{s}_j \le l^{s}_i + w_{ij} \quad \mathrm{and} \quad l^{s}_j \ge l^{s}_i + w_{ij} - M.
\end{eqnarray*}
Hence, the simplified constraints of (\ref{o:plcon}) are identical to those of (\ref{d:plcon}) for Case 1.

For Case 2, on one hand, constraints (\ref{d:plcon}) can be simplified as follows:
\begin{eqnarray*}
l^{s_{k_l}}_j \le l^{s_{k_l}}_i + w_{ij} - \varepsilon \quad \mathrm{and} \quad l^{s_{k_l}}_j \ge l^{s_{k_l}}_i + w_{ij} - M,
\end{eqnarray*}
and $\forall k \in \mathcal{D}_s$ such that $\sum_{h: (h, j) \in \mathcal{L}} {x^*}_{hj}^{k} = 0$,
\begin{eqnarray*}
l^{s_k}_j \le l^{s_k}_i + w_{ij} \quad \mathrm{and} \quad l^{s_k}_j \ge l^{s_k}_i + w_{ij} - M.
\end{eqnarray*}
On the other hand, since ${y^*}_{ij}^{s} = \max_{k \in \mathcal{D}_s} {x^*}_{ij}^{k} = 0$ but $\sum_{h: (h, j) \in \mathcal{L}} {y^*}_{hj}^{s} = 1$, constraints
(\ref{o:plcon}) can be simplified as follows:
\begin{eqnarray*}
l^{s}_j \le l^{s}_i + w_{ij} - \varepsilon \quad \mathrm{and} \quad l^{s}_j \ge l^{s}_i + w_{ij} - M.
\end{eqnarray*}
Hence, the simplified constraints of (\ref{o:plcon}) are identical to those of (\ref{d:plcon}) for Case 2.

For Case 3, on one hand, according to the sub-path optimality constraints, $\forall k \in \mathcal{D}_s$, if ${x^*}_{ij}^{k} = 0$, $\sum_{h: (h, j) \in
\mathcal{L}} {x^*}_{hj}^{k} = 0$. Then, constraints (\ref{d:plcon}) can be simplified as follows:
\begin{eqnarray*}
l^{s_{k_l}}_j \le l^{s_{k_l}}_i + w_{ij} \quad \mathrm{and} \quad l^{s_{k_l}}_j \ge l^{s_{k_l}}_i + w_{ij},
\end{eqnarray*}
and $\forall k \in \mathcal{D}_s$ such that ${x^*}_{ij}^{k} = 0$,
\begin{eqnarray*}
l^{s_k}_j \le l^{s_k}_i + w_{ij} \quad \mathrm{and} \quad l^{s_k}_j \ge l^{s_k}_i + w_{ij} - M.
\end{eqnarray*}
On the other hand, since ${y^*}_{ij}^{s} = \max_{k \in \mathcal{D}_s} {x^*}_{ij}^{k} = 1$ and $\sum_{h: (h, j) \in \mathcal{L}} {y^*}_{hj}^{s} = 1$, constraints
(\ref{o:plcon}) can be simplified as follows:
\begin{eqnarray*}
l^{s}_j \le l^{s}_i + w_{ij} \quad \mathrm{and} \quad l^{s}_j \ge l^{s}_i + w_{ij}.
\end{eqnarray*}
Hence, the simplified constraints of (\ref{o:plcon}) are identical to those of (\ref{d:plcon}) for Case 3.

Since for all the three cases, ${y^*}_{ij}^{s}, s \in \mathcal{S}, (i, j) \in \mathcal{L}$ results in the same path length constraints for OBM as those resulting
from ${x^*}_{ij}^{k}, k \in \mathcal{D}, (i, j) \in \mathcal{L}$ for DBM, then, there is a corresponding feasible solution satisfying the path length constraints
(\ref{o:plcon}) in OBM, provided that there is a feasible solution satisfying the path length constraints (\ref{d:plcon}) in DBM.
\end{proof}

\begin{proposition} \label{prop:plcon:otod}
There is a corresponding solution satisfying the path length constraints in DBM, for each solution satisfying the path length constraints in OBM.
\end{proposition}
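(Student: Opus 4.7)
The plan is to mirror the construction used in Proposition~\ref{prop:feasible:otod} and then verify that the resulting $x^*$'s fit the DBM path length constraints, keeping the link-weight and path-length variables unchanged.

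First I would take a feasible solution $y^{*s}_{ij}, f^{*s}_{ij}, w^{*}_{ij}, l^{*s}_{i}$ to OBM and construct $x^{*k}_{ij}$ by running the tree-tracing assigning process (\ref{assigning}) from Proposition~\ref{prop:feasible:otod}, which is possible because the feasibility constraints of ROBM are all satisfied. I would then re-use $w^{*}_{ij}$ and $l^{*s}_{i}$ unchanged as the DBM values. Theorem~\ref{thm:feasible:equ} and its proof already hand me $x^{*k}_{ij}\in\{0,1\}$, the flow conservation, link capacity, and sub-path optimality constraints; what remains is to show that the path length constraints (\ref{d:plcon}) in DBM are also satisfied.

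The core of the argument is a case analysis on the values of $x^{*k}_{ij}$ and $\sum_{h:(h,j)\in\mathcal{L}} x^{*k}_{hj}$, mirroring the three cases of (\ref{d:plcon0}) that were used in Proposition~\ref{prop:plcon:dtoo}. The key bridge is the following observation, which I would prove first as a short claim from the construction~(\ref{assigning}) together with constraints (\ref{o:pucon}): for every $k\in\mathcal{D}$ and every $j\in\mathcal{N}$, if $\sum_{h}x^{*k}_{hj}=1$ then the unique $h$ with $x^{*k}_{hj}=1$ is exactly the unique $h$ with $y^{*s_k}_{hj}=1$, so in particular $\sum_{h}y^{*s_k}_{hj}=1$; and if $\sum_{h}x^{*k}_{hj}=0$ then node $j$ lies off the path of demand $k$, so $x^{*k}_{ij}=0$ as well for every $i$. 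This lets me translate each DBM scenario into an OBM scenario where the corresponding constraint in (\ref{o:plcon0}) is already known to hold for $s=s_k$.

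With that bridge in hand, the three cases are routine: (i) if $x^{*k}_{ij}=1$, then by $x^{*k}_{ij}\le y^{*s_k}_{ij}$ we get $y^{*s_k}_{ij}=1$, so (\ref{o:plcon0}) gives $l^{s_k}_j=l^{s_k}_i+w_{ij}$, which is exactly the DBM equality; (ii) if $x^{*k}_{ij}=0$ and $\sum_h x^{*k}_{hj}=1$, then by the observation $\sum_h y^{*s_k}_{hj}=1$ while $y^{*s_k}_{ij}=0$ (the tree enters $j$ through a link different from $(i,j)$, and (\ref{o:pucon}) forbids two incoming edges into $j$), so (\ref{o:plcon0}) gives the strict inequality $l^{s_k}_j<l^{s_k}_i+w_{ij}$; (iii) if $x^{*k}_{ij}=0$ and $\sum_h x^{*k}_{hj}=0$, then $y^{*s_k}_{ij}$ may be $0$ or $1$, but in every OBM subcase the conclusion of (\ref{o:plcon0}) implies the weak inequality $l^{s_k}_j\le l^{s_k}_i+w_{ij}$, which is the DBM requirement.

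The main obstacle I anticipate is Case~(ii): one has to rule out the possibility that the tree from $s_k$ enters $j$ through $(i,j)$ while demand $k$'s path enters $j$ through a different link. This is where the structural fact that demand $k$'s path is a sub-path of the $s_k$-tree, as enforced by (\ref{o:pucon}) and made explicit by the assigning process~(\ref{assigning}), is indispensable, and I would state it as a small lemma before launching into the case analysis to keep the three cases clean. Apart from that point the argument is mechanical bookkeeping parallel to Proposition~\ref{prop:plcon:dtoo}.
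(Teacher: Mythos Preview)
Your proposal is correct and follows essentially the same approach as the paper: construct $x^{*k}_{ij}$ from the OBM solution via the assigning process~(\ref{assigning}), keep $w^*$ and $l^*$ unchanged, and then do a three-way case analysis linking the $x^*$ configuration to the $y^*$ configuration so that the OBM path length constraints~(\ref{o:plcon0}) furnish exactly what~(\ref{d:plcon0}) requires. The only cosmetic difference is that the paper organises the cases by the values of $y^{*s}_{ij}$ and $\sum_h y^{*s}_{hj}$ and then reads off the induced DBM constraints, whereas you organise them by $x^{*k}_{ij}$ and $\sum_h x^{*k}_{hj}$ and translate back to OBM; the bridge observation you single out (that the incoming link used by demand $k$ coincides with the unique tree link into $j$, forced by~(\ref{o:pucon})) is precisely the content the paper uses implicitly in its Case~2 and Case~3.
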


\begin{proof}
Suppose ${y^*}_{ij}^{s}, {f^*}_{ij}^{s}, s \in \mathcal{S}, (i, j) \in \mathcal{L}$, ${w^*}_{ij}, (i, j) \in \mathcal{L}$, and ${l^*}_{i}^{s}, s \in \mathcal{S},
i \in \mathcal{N}$ is a feasible solution to OBM. Then since ROBM is a relaxation of OBM, ${y^*}_{ij}^{s}, {f^*}_{ij}^{s}, s \in \mathcal{S}, (i, j) \in
\mathcal{L}$ is also a feasible solution to ROBM and so satisfies all corresponding constraints.

According to the assigning process (\ref{assigning}), let ${x^*}_{ij}^{k}, k \in \mathcal{D}, (i, j) \in \mathcal{L}$ be the corresponding solution to RDBM.

$\forall s \in \mathcal{S}, \forall (i, j) \in \mathcal{L}$, there are three cases:
\begin{itemize}
\item Case 1: ${y^*}_{ij}^{s} = 0$ and $\sum_{h: (h, j) \in \mathcal{L}} {y^*}_{hj}^{s} = 0$;
\item Case 2: ${y^*}_{ij}^{s} = 0$ and $\sum_{h: (h, j) \in \mathcal{L}} {y^*}_{hj}^{s} = 1$;
\item Case 3: ${y^*}_{ij}^{s} = 1$.
\end{itemize}

For Case 1, constraints (\ref{o:plcon}) can be simplified as follows:
\begin{eqnarray*}
l^{s}_j \le l^{s}_i + w_{ij} \quad \mathrm{and} \quad l^{s}_j \ge l^{s}_i + w_{ij} - M.
\end{eqnarray*}
According to (\ref{assigning}), $\forall k \in \mathcal{D}_s, {x^*}_{ij}^{k} = 0$ and $\sum_{h: (h, j) \in \mathcal{L}} {x^*}_{hj}^{k} = 0$. Then, $\forall k \in
\mathcal{D}_s$, constraints (\ref{d:plcon}) can be simplified as follows:
\begin{eqnarray*}
l^{s_k}_j \le l^{s_k}_i + w_{ij} \quad \mathrm{and} \quad l^{s_k}_j \ge l^{s_k}_i + w_{ij} - M.
\end{eqnarray*}
Hence, the simplified constraints of (\ref{d:plcon}) are identical to those of (\ref{o:plcon}) for Case 1.

For Case 2, constraints (\ref{o:plcon}) can be simplified as follows:
\begin{eqnarray*}
l^{s}_j \le l^{s}_i + w_{ij} - \varepsilon \quad \mathrm{and} \quad l^{s}_j \ge l^{s}_i + w_{ij} - M.
\end{eqnarray*}
According to (\ref{assigning}), $\forall k \in \mathcal{D}_s, {x^*}_{ij}^{k} = 0$ and $\exists k_l \in \mathcal{D}_s, \sum_{h: (h, j) \in \mathcal{L}}
{x^*}_{hj}^{k_l} = 1$. Then, constraints (\ref{d:plcon}) can be simplified as follows:
\begin{eqnarray*}
l^{s_{k_l}}_j \le l^{s_{k_l}}_i + w_{ij} - \varepsilon \quad \mathrm{and} \quad l^{s_{k_l}}_j \ge l^{s_{k_l}}_i + w_{ij} - M,
\end{eqnarray*}
and $\forall k \in \mathcal{D}_s$ such that $\sum_{h: (h, j) \in \mathcal{L}} {x^*}_{hj}^{k} = 0$,
\begin{eqnarray*}
l^{s_k}_j \le l^{s_k}_i + w_{ij} \quad \mathrm{and} \quad l^{s_k}_j \ge l^{s_k}_i + w_{ij} - M.
\end{eqnarray*}
Hence, the simplified constraints of (\ref{d:plcon}) are identical to those of (\ref{o:plcon}) for Case 2.

For Case 3, constraints (\ref{o:plcon}) can be simplified as follows:
\begin{eqnarray*}
l^{s}_j \le l^{s}_i + w_{ij} \quad \mathrm{and} \quad l^{s}_j \ge l^{s}_i + w_{ij}.
\end{eqnarray*}
According to (\ref{assigning}), $\exists k_l \in \mathcal{D}_s, {x^*}_{ij}^{k_l} = 1$ and $\sum_{h: (h, j) \in \mathcal{L}} {x^*}_{hj}^{k_l} = 1$. In addition,
according to the sub-path optimality constraints, $\forall k \in \mathcal{D}_s$, if ${x^*}_{ij}^{k} = 0$, $\sum_{h: (h, j) \in \mathcal{L}} {x^*}_{hj}^{k} = 0$.
Then, constraints (\ref{d:plcon}) can be simplified as follows:
\begin{eqnarray*}
l^{s_{k_l}}_j \le l^{s_{k_l}}_i + w_{ij} \quad \mathrm{and} \quad l^{s_{k_l}}_j \ge l^{s_{k_l}}_i + w_{ij},
\end{eqnarray*}
and $\forall k \in \mathcal{D}_s$ such that ${x^*}_{ij}^{k} = 0$,
\begin{eqnarray*}
l^{s_k}_j \le l^{s_k}_i + w_{ij} \quad \mathrm{and} \quad l^{s_k}_j \ge l^{s_k}_i + w_{ij} - M.
\end{eqnarray*}
Hence, the simplified constraints of (\ref{d:plcon}) are identical to those of (\ref{o:plcon}) for Case 3.

Since for all the three cases, ${x^*}_{ij}^{k}, k \in \mathcal{D}, (i, j) \in \mathcal{L}$ results in the same path length constraints for DBM as those resulting
from ${y^*}_{ij}^{s}, s \in \mathcal{S}, (i, j) \in \mathcal{L}$ for OBM, then, there is a corresponding feasible solution satisfying the path length constraints
(\ref{d:plcon}) in DBM, provided that there is a feasible solution satisfying the path length constraints (\ref{o:plcon}) in OBM.
\end{proof}

\begin{corollary} \label{corol:plcon:equ}
The path length constraints in OBM are equivalent to those in DBM, concerning the feasibility of the unique shortest path routing problem.
\end{corollary}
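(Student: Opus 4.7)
The plan is to derive this corollary as an immediate consequence of the two propositions that immediately precede it, namely Proposition~\ref{prop:plcon:dtoo} and Proposition~\ref{prop:plcon:otod}. Each of these establishes one direction of the desired equivalence: the former shows that every DBM-feasible triple $(x^{k*}_{ij}, w^*_{ij}, l^{s*}_i)$ satisfying the path length constraints (\ref{d:plcon}) induces, via $y^{s*}_{ij} = \max_{k \in \mathcal{D}_s} x^{k*}_{ij}$, a corresponding OBM-feasible collection satisfying (\ref{o:plcon}); the latter shows that every OBM-feasible solution satisfying (\ref{o:plcon}) induces, via the assigning process (\ref{assigning}), a corresponding DBM-feasible collection satisfying (\ref{d:plcon}). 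Combining the two directions gives precisely the equivalence claimed.

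Concretely, I would write a short proof that begins by invoking Proposition~\ref{prop:plcon:dtoo} to get the forward direction (any solution feasible for the DBM path length constraints admits a corresponding solution feasible for the OBM path length constraints), then invoke Proposition~\ref{prop:plcon:otod} to get the reverse direction, and conclude that the two sets of path length constraints are equivalent concerning the feasibility of the unique shortest path routing problem.

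There is no real additional work at this stage: the substantive case analysis (the three scenarios for each link--origin pair, with the link used, not used but its head visited, or neither) has already been carried out inside the two propositions, where the linearized logical constraints of (\ref{d:plcon}) and (\ref{o:plcon}) were shown to collapse to identical inequalities on $l^s_i$ and $w_{ij}$ under the variable correspondence. Hence the only potential obstacle, establishing that this correspondence preserves not just the flow structure but also the $\varepsilon$/$M$-linearized path length inequalities, is already resolved. The corollary itself reduces to a one-line citation of the two propositions.
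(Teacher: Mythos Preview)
Your proposal is correct and matches the paper's approach: the paper states Corollary~\ref{corol:plcon:equ} without proof, treating it as an immediate consequence of Propositions~\ref{prop:plcon:dtoo} and~\ref{prop:plcon:otod}, exactly as you outline. No additional argument is required beyond citing the two propositions for the two directions.
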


\begin{theorem} \label{thm:feasible:equ:}
OBM and DBM and equivalent concerning the feasibility of the unique shortest path routing problem.
\end{theorem}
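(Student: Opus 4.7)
The plan is to assemble the theorem by combining the feasibility equivalence between the relaxations (Theorem~\ref{thm:feasible:equ}) with the constraint equivalence on the path length side (Corollary~\ref{corol:plcon:equ}). Observe that DBM is obtained from RDBM by augmenting with the link weight variables (\ref{d:lwvar}), the path length variables (\ref{d:plvar}), and the path length constraints (\ref{d:plcon}); similarly, OBM is obtained from ROBM by augmenting with (\ref{o:lwvar}), (\ref{o:plvar}), and (\ref{o:plcon}). Since the additional variables $w_{ij}$ and $l^s_i$ are shared between the two models in both domain and interpretation, the only nontrivial content to transfer is the path length block.

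For the direction from DBM to OBM, I would first take a feasible solution $({x^*}^k_{ij}, {w^*}_{ij}, {l^*}^s_i)$ of DBM. Restricting to the routing decision variables produces a feasible solution of RDBM, so Proposition~\ref{prop:feasible:dtoo} yields corresponding variables ${y^*}^s_{ij} = \max_{k \in \mathcal{D}_s} {x^*}^k_{ij}$ and ${f^*}^s_{ij} = \sum_{k \in \mathcal{D}_s} d_k {x^*}^k_{ij}$ satisfying (\ref{o:fccon:}), (\ref{o:fbcon}), (\ref{o:lccon}), and (\ref{o:pucon}). Keeping the same weights ${w^*}_{ij}$ and path lengths ${l^*}^s_i$, Proposition~\ref{prop:plcon:dtoo} verifies that this $y^*$ satisfies (\ref{o:plcon}), since it is precisely the mapping used there. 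Thus $({y^*}^s_{ij}, {f^*}^s_{ij}, {w^*}_{ij}, {l^*}^s_i)$ is a feasible solution of OBM.

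For the converse direction, start with a feasible solution $({y^*}^s_{ij}, {f^*}^s_{ij}, {w^*}_{ij}, {l^*}^s_i)$ of OBM. The projection onto $(y^*, f^*)$ is feasible for ROBM, so by Proposition~\ref{prop:feasible:otod} the assigning process~(\ref{assigning}) produces routing decision variables ${x^*}^k_{ij}$ satisfying (\ref{d:fccon:}), (\ref{d:lccon}), and (\ref{d:spcon}), i.e., an RDBM-feasible point. Applying Proposition~\ref{prop:plcon:otod} with the same ${w^*}_{ij}$ and ${l^*}^s_i$ then shows that the path length constraints (\ref{d:plcon}) in DBM are satisfied. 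Hence $({x^*}^k_{ij}, {w^*}_{ij}, {l^*}^s_i)$ is feasible for DBM (and, by Proposition~\ref{prop:d:spcon}, for DBM$^\prime$ as well).

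The main obstacle is making sure that the two translation maps used across these auxiliary results are mutually consistent: the $y^* = \max_k x^*$ map in Proposition~\ref{prop:feasible:dtoo} must be exactly the one assumed in Proposition~\ref{prop:plcon:dtoo}, and the assigning process~(\ref{assigning}) used in Proposition~\ref{prop:feasible:otod} must be the same one whose case analysis is invoked in Proposition~\ref{prop:plcon:otod}. Both conditions hold by construction in the propositions, so once this is pointed out the remainder of the argument is a direct citation of Theorem~\ref{thm:feasible:equ} and Corollary~\ref{corol:plcon:equ}; no further computation is required.
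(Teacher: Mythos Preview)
Your proposal is correct and follows essentially the same approach as the paper: both arguments combine the feasibility equivalence of the relaxations (Theorem~\ref{thm:feasible:equ}) with the equivalence of the path length constraints (Corollary~\ref{corol:plcon:equ}), noting that the same translation maps (the $\max$/sum construction and the assigning process~(\ref{assigning})) are used consistently across the auxiliary propositions. One small point worth making explicit in your DBM$\to$OBM direction is that the restriction of a DBM-feasible $x^*$ is RDBM-feasible only because Proposition~\ref{prop:d:spcon} guarantees the sub-path optimality constraints~(\ref{d:spcon}); the paper handles this by passing through DBM$^\prime$, and you should cite Proposition~\ref{prop:d:spcon} at that step rather than only at the end.
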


\begin{proof}
By Proposition~\ref{prop:d:spcon}, DBM is equivalent to DBM$^\prime$.

In addition, as discussed at the begin of Section \ref{sec:vrf}, OBM is a reduction of ROBM and DBM$^\prime$ is a reduction of RDBM. Besides the additional link
weight variables and path length variables, the difference between OBM and ROBM are the path length constraints (\ref{o:plcon}) and the difference between
DBM$^\prime$ and RDBM are the path length constraints (\ref{d:plcon}).

By Theorem~\ref{thm:feasible:equ}, ROBM and RDBM are equivalent concerning the feasibility of the relaxed problem. By Corollary~\ref{corol:plcon:equ}, the path
length constraints in OBM are equivalent to the counterparts in DBM, and so those in DBM$^\prime$. Therefore, OBM is equivalent to DBM$^\prime$, and so DBM,
concerning the feasibility of the unique shortest path routing problem.
\end{proof}

\begin{theorem} \label{thm:optimal:equ:}
OBM and DBM and equivalent concerning the optimality of the unique shortest path routing problem.
\end{theorem}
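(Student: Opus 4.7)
The plan is to leverage Theorem~\ref{thm:feasible:equ:} and augment the two feasibility-preserving correspondences (the demand-to-origin aggregation of Proposition~\ref{prop:feasible:dtoo}/Proposition~\ref{prop:plcon:dtoo} and the tree-walking assigning process of Proposition~\ref{prop:feasible:otod}/Proposition~\ref{prop:plcon:otod}) with the additional observation that each of them preserves the objective value. Since the feasible regions are already in bijection in an objective-preserving way, the minima of (\ref{d:obj}) and (\ref{o:obj}) must coincide, which is exactly the claim.

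For the DBM-to-OBM direction, given a feasible solution $(x^*, w^*, l^*)$ to DBM with value $\sum_{(i,j) \in \mathcal{L}} \sum_{k \in \mathcal{D}} d_k {x^*}^k_{ij}$, I would invoke Proposition~\ref{prop:feasible:dtoo} and Proposition~\ref{prop:plcon:dtoo} to obtain a feasible OBM solution with ${f^*}^s_{ij} = \sum_{k \in \mathcal{D}_s} d_k {x^*}^k_{ij}$ and the same $w^*, l^*$. Summing over $s$,
\begin{eqnarray*}
\sum_{(i,j) \in \mathcal{L}} \sum_{s \in \mathcal{S}} {f^*}^s_{ij}
= \sum_{(i,j) \in \mathcal{L}} \sum_{s \in \mathcal{S}} \sum_{k \in \mathcal{D}_s} d_k {x^*}^k_{ij}
= \sum_{(i,j) \in \mathcal{L}} \sum_{k \in \mathcal{D}} d_k {x^*}^k_{ij},
\end{eqnarray*}
so the OBM value equals the DBM value. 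This direction is essentially bookkeeping; no new argument is required beyond what has already been proved.

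The OBM-to-DBM direction is the main obstacle, because here the $x$-variables are constructed through the assigning process~(\ref{assigning}) from the tree structure encoded by $y^*$, and one must then argue that the induced $\sum_{k \in \mathcal{D}} d_k {x^*}^k_{ij}$ recovers the original $\sum_{s \in \mathcal{S}} {f^*}^s_{ij}$. My plan is to use the path uniqueness constraints~(\ref{o:pucon}) together with the flow bound constraints~(\ref{o:fbcon}) to conclude that, for each $s \in \mathcal{S}$, the edges with ${y^*}^s_{ij} = 1$ form a tree rooted at $s$ that covers every destination in $\mathcal{T}_s$; on such a tree, the flow conservation constraints~(\ref{o:fccon:}) uniquely determine ${f^*}^s_{ij}$ as the total demand $\sum_{k \in \mathcal{D}_s:\, (i,j) \in P_k^s} d_k$, where $P_k^s$ is the unique tree-path from $s$ to $t_k$. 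Since the assigning process precisely traces these tree-paths, ${x^*}^k_{ij} = 1$ if and only if $(i,j) \in P_k^s$, hence ${f^*}^s_{ij} = \sum_{k \in \mathcal{D}_s} d_k {x^*}^k_{ij}$ and the two objective sums again agree.

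With both correspondences objective-preserving, any optimal DBM solution maps to a feasible OBM solution of the same value, and vice versa, so their optimal values coincide. Combined with Theorem~\ref{thm:feasible:equ:}, this establishes equivalence concerning optimality. The delicate point worth writing out carefully is the tree-uniqueness argument for flows in the second direction, since everything else reduces to the algebraic identity displayed above.
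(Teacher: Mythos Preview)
Your proposal is correct and follows essentially the same approach as the paper, which simply states that the result ``follows directly from Theorem~\ref{thm:feasible:equ:} by constructing the corresponding optimal solutions between OBM and DBM.'' You have merely made explicit what the paper leaves implicit---namely, that the two feasibility-preserving correspondences also preserve objective value, with the tree-uniqueness argument for the OBM-to-DBM direction being the only nontrivial verification.
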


\begin{proof}
The conclusion follows directly from Theorem~\ref{thm:feasible:equ:} by constructing the corresponding optimal solutions between OBM and DBM.
\end{proof}

\begin{corollary}
OBM is a correct model of the unique shortest path routing problem.
\end{corollary}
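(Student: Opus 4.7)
The plan is to assemble this corollary directly from three results already in hand: Corollary~\ref{cor:d:cor}, which establishes that DBM is a correct model of the unique shortest path routing problem; Theorem~\ref{thm:feasible:equ:}, which establishes the equivalence of OBM and DBM with respect to feasibility; and Theorem~\ref{thm:optimal:equ:}, which establishes the corresponding equivalence with respect to optimality. Since ``correctness'' here amounts to the twin requirements that the feasible set of the model represents exactly the valid unique shortest path routings and that the model's optimum coincides with the optimum of the underlying problem, correctness is preserved under any equivalence that transports both feasibility and optimality simultaneously.

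Concretely, I would argue as follows. By Corollary~\ref{cor:d:cor}, the feasible region of DBM is in bijection with the set of admissible unique shortest path routings and its optimum equals the optimum of the problem. By Theorem~\ref{thm:feasible:equ:}, the aggregation $y^s_{ij} = \max_{k \in \mathcal{D}_s} x^k_{ij}$, $f^s_{ij} = \sum_{k \in \mathcal{D}_s} d_k x^k_{ij}$ used in Proposition~\ref{prop:feasible:dtoo} and the assignment procedure~(\ref{assigning}) used in Proposition~\ref{prop:feasible:otod}, together with the matching path length arguments in Propositions~\ref{prop:plcon:dtoo} and~\ref{prop:plcon:otod}, furnish a two-way feasibility correspondence between OBM and DBM. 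By Theorem~\ref{thm:optimal:equ:}, this correspondence preserves objective values, since the identity $\sum_{s \in \mathcal{S}} f^s_{ij} = \sum_{k \in \mathcal{D}} d_k x^k_{ij}$ makes~(\ref{o:obj}) and~(\ref{d:obj}) agree on paired solutions. Composing the OBM/DBM equivalence with the DBM/problem correctness thus yields the OBM/problem correctness.

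There is essentially no genuine obstacle: the substantive work has already been performed in Propositions~\ref{prop:feasible:dtoo}, \ref{prop:feasible:otod}, \ref{prop:plcon:dtoo}, \ref{prop:plcon:otod} and in Corollary~\ref{cor:d:cor}. The only point I would take care to state explicitly is that the optimality transfer runs in both directions, so that an optimal OBM solution maps to an optimal DBM solution (hence to a valid optimal routing) and every valid optimal routing lifts to an OBM solution of the same objective value, giving equality of optima rather than a one-sided bound. With this remark the proof reduces to a one-line chain of invocations of the previously established results.
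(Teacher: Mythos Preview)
Your proposal is correct and matches the paper's approach: the paper states this corollary immediately after Theorem~\ref{thm:optimal:equ:} with no explicit proof, treating it as a direct consequence of Corollary~\ref{cor:d:cor} together with Theorems~\ref{thm:feasible:equ:} and~\ref{thm:optimal:equ:}, exactly as you outline. Your write-up is more detailed than the paper's (which is silent), but the logical chain is identical.
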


\section{Comparisons between the Two Formulations}

Concerning the unique shortest path routing problem, it has been shown that routing performances resulting from the proposed complete formulations are much better
than those derived from the default methods, by testing on $30$ randomly generated data instances with combinations of different parameter scenarios. The
resulting average maximum utilization is $30.94\%$ of that from using the hop-count method and $45.54\%$ of that from using the inv-cap method. It hence
demonstrates the significant gain achieved by formulating the problem completely and solving it optimally.

Between the two complete formulations, compared with DBM, OBM has advantages on both constraint structure for applying constraint generation algorithms and model
size.

\subsection{Constraint Structure}

The constraint structures of DBM and OBM are shown in Figure~\ref{fig:dbm} and Figure~\ref{fig:obm}, respectively.

\begin{figure}[h!]
\begin{center}
\includegraphics [width = 0.66 \textwidth] {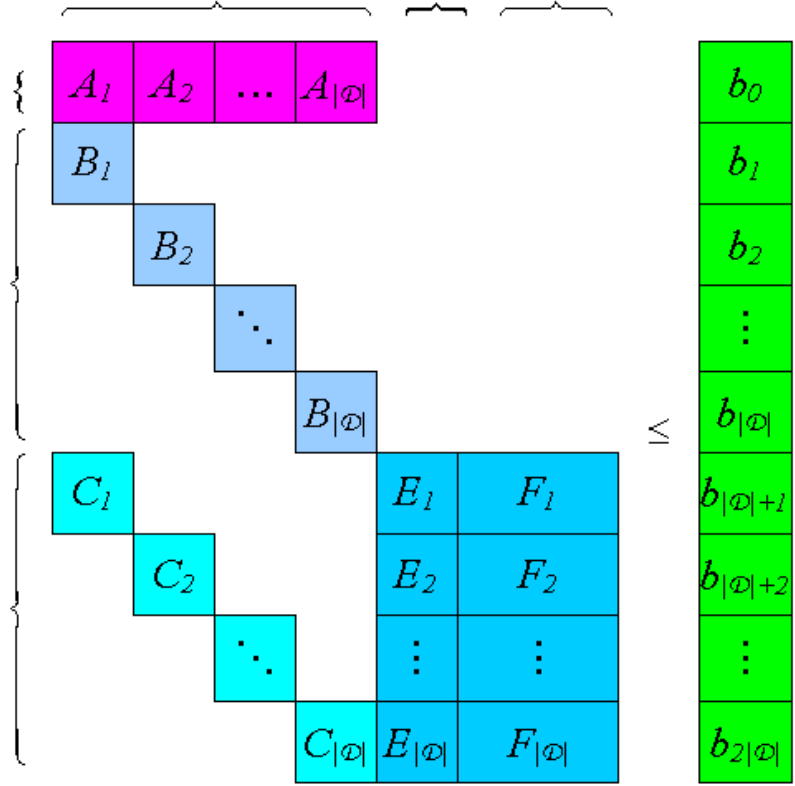}
\caption{Constraint structure of DBM} \label{fig:dbm}
\end{center}
\end{figure}
\begin{figure}[h!]
\begin{center}
\includegraphics [width = 0.66 \textwidth] {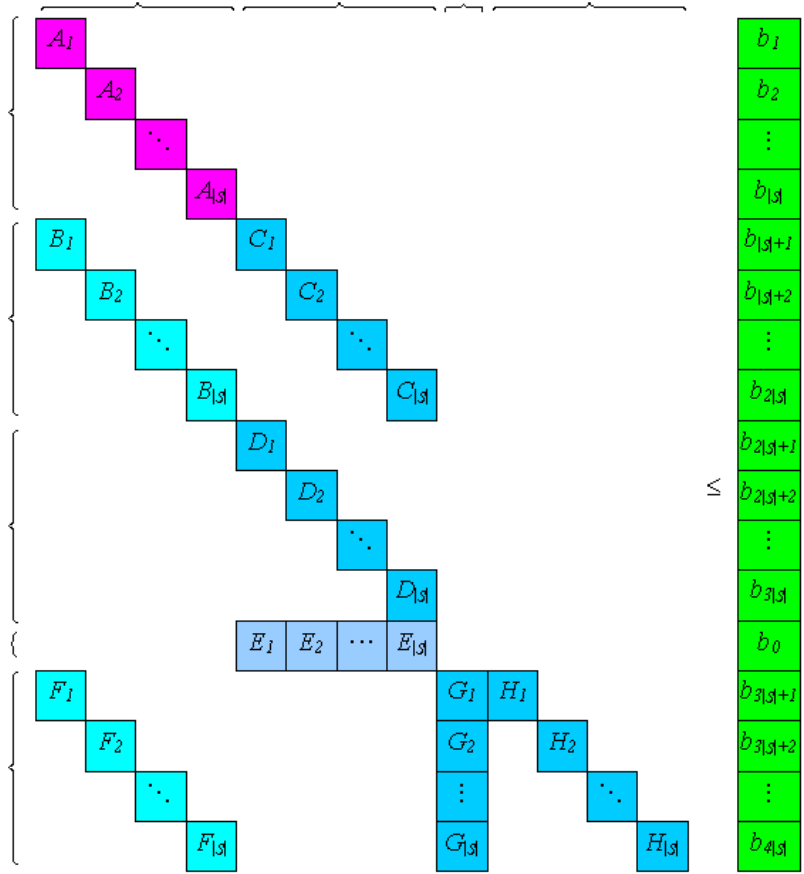}
\caption{Constraint structure of OBM} \label{fig:obm}
\end{center}
\end{figure}

In Figure~\ref{fig:dbm}, the first row represents the link capacity constraints (\ref{d:lccon}), the next four rows correspond to the flow conservation
constraints (\ref{d:fccon}), and the last four rows represent the path length constraints (\ref{d:plcon}). Accordingly, columns correspond to variables.

As can be seen, among the three sets of constraints, the flow conservation constraints and the link capacity constraints contain only the routing decision
variables, whereas the path length constraints couple the routing decision variables with the link weight variables and the path length variables. Hence,
constraint generation algorithms such as the Benders decomposition method (Benders, 1962) may be considered to be the promising solution approaches for the
problem. The problem can be decomposed into one integer programming master problem and one linear programming subproblem. The master problem deals with the flow
conservation constraints and the link capacity constraints, and so contains the routing decision variables only. Accordingly, the subproblem copes with the path
length constraints.

Similarly, in Figure~\ref{fig:obm}, the first four rows represent the path uniqueness constraints (\ref{o:pucon}), the next four rows correspond to the flow bound
constraints (\ref{o:fbcon}), the third four rows represent the flow conservation constraints (\ref{o:fccon}), the next row corresponds to the link capacity
constraints (\ref{o:lccon}), and the last four rows represent the path length constraints (\ref{o:plcon}). Columns correspond to variables accordingly.

As can be noted, although DBM has a simpler constraint structure, OBM has more flexibility to apply decomposition algorithms to solve the problem.

As shown in Figure \ref{fig:obm}, with OBM, the problem can be globally decomposed into one master problem and two subproblems, instead of one master problem and
one subproblem as with DBM. The master problem contains only the routing decision variables and the path uniqueness constraints accordingly. The first subproblem
deals with the auxiliary flow variables and the second subproblem copes with the link weight variables and the path length variables. In addition, the master
problem can be further decomposed, with one independent subproblem corresponding to each origin node.

\subsection{Model Size}

Compared with DBM, OBM defines explicitly the auxiliary flow variables and the flow bound constraints accordingly. However, in general, $|\mathcal{S}| <<
|\mathcal{D}|$, and the size of OBM is much smaller than that of DBM. The model sizes of the two formulations are as shown in Table~\ref{table:size}, where
\textit{\#Variables} represents the number of variables and \textit{\#Constraints} denotes the number of constraints.

More concretely, the model sizes of both the original problems and the master problems of the two formulations on a randomly generated data instance with
$|\mathcal{N}| = 50$, $|\mathcal{L}| = 642$, $|\mathcal{D}| = 1000$, and $|\mathcal{S}| = 50$ are shown in Table~\ref{table:instance}.

As can be seen from Table~\ref{table:instance}, with OBM, the number of variables of the original problem decreases from over $600,000$ to $64,200$ and the number
of constraints drops from over $1,000,000$ to less than $38,000$. In addition, with OBM, both the number of variables and the number of constraints of the master
problem decline $20$ times.
\begin{table}[h!]
\begin{center}
\caption{Model sizes of DBM and OBM} \label{table:size}
\begin{tabular}{c r r r r}
\hline
Model & & \multicolumn{1}{c}{\#Variables} & & \multicolumn{1}{c}{\#Constraints} \\
\hline
DBM & & $|\mathcal{D}||\mathcal{L}| + |\mathcal{S}||\mathcal{N}| + |\mathcal{L}|$
& & $|\mathcal{D}||\mathcal{N}| + 2|\mathcal{D}||\mathcal{L}| + |\mathcal{L}|$ \\
OBM & & $2|\mathcal{S}||\mathcal{L}| + |\mathcal{S}||\mathcal{N}| + |\mathcal{L}|$
& & $2|\mathcal{S}||\mathcal{N}| + 3|\mathcal{S}||\mathcal{L}| + |\mathcal{L}|$ \\
\hline
\end{tabular}
\end{center}
\end{table}
\begin{table}[h!]
\begin{center}
\caption{Model sizes of DBM and OBM of a large data instance}
\label{table:instance}
\begin{tabular}{c r r r r r r}
\hline
& & \multicolumn{2}{c}{Original Problem} & & \multicolumn{2}{c}{Master Problem} \\
\cline{3-4} \cline{6-7}
& & \multicolumn{1}{c}{\#Variables} & \multicolumn{1}{c}{\#Constraints}
& & \multicolumn{1}{c}{\#Variables} & \multicolumn{1}{c}{\#Constraints} \\
\hline
DBM & & $645,142 \quad$ & $1,334,642 \quad$ & & $642,000 \quad$ & $50,642 \quad$ \\
OBM & & $64,200 \quad$ & $37,742 \quad$ & & $32,100 \quad$ & $2,500 \quad$ \\
\hline
\end{tabular}
\end{center}
\end{table}

As a conclusion, compared with DBM, OBM has a smaller model size and a more flexible constraint structure for decomposition algorithms such as the Benders
decomposition method to solve the problem.

\section{Conclusions}

With the aim of an exact solution approach to the unique shortest path routing problem on average data instances arising from real-world applications, two
complete and explicit mathematical formulations with a polynomial number of constraints for the problem are developed. A demand-based formulation is first
introduced, based on the study of the relationships between the length of a shortest path and the weights of links that the path traverses. The problem is further
formulated as an origin-based model by analyzing solution properties of the problem. The two formulations are then mathematically proved to be correct and to be
equivalent concerning both the feasibility and the optimality of the problem. Based on the study of the constraint structures and model sizes of the two
formulations, the origin-based formulation is identified to be the better one for decomposition algorithms such as the Benders decomposition method to solve the
problem.

The two formulations may be generalized to other network flow and network routing problems. Prospective future work may lie in investigating possible improvements
concerning both problem formulation and solution algorithm to improve the efficiency of the solution approach proposed. In particular, investigations may focus on
possible improvements related to three factors: the closeness between the initial solution and the final solution to the master problem, the strength of cuts
generated at each iteration, and the efficiency of an algorithm to solve the integer programming master problem. For example, redundant constraints may be
generated to tighten the feasible region of the initial master problem, strategies such as active set method may be applied to strengthen the cuts generated from
the subproblems, and schemes such as the Lagrangian relaxation method may be embedded into the solution algorithm to improve the efficiency of solving the master
problem at each iteration.

\section*{References}


\begin{hangref}

\item Ahuja, R. K., T. L. Magnanti, J. B. Orlin. 1993. \textit{Network Flows: Theory, Algorithms, and Applications}. Prentice Hall.
%
\item Ameur, W. B., E. Gourdin. 2003. Internet Routing and Related Topology Issues. \textit{SIAM J. Discrete Math.} \textbf{17} 18--49.
%
\item Benders, J. 1962. Partitioning Procedures for Solving Mixed-Variables Programming Problems. \textit{Numer. Math.} \textbf{4} 238--252.
%
\item Bertsekas, D., R. Gallager. 1992. \textit{Data Networks}. Prentice Hall.
%
\item Bley, A., T. Koch. 2002. Integer Programming Approaches to Access and Backbone IP-network Planning. ZIB-Report 02-41.
%
\item Ericsson, M., M. G. C. Resende, P. M. Pardalos. 2002. A Genetic Algorithm for the Weight Setting Problem in OSPF Routing. \textit{J.
Comb. Optim.} \textbf{6} 299--333.
%
\item Feldmann, A., A. Greenberg, C. Lund, N. Reingold, J. Rexford, F. True. 2001. Deriving Traffic Demands for Operational IP Networks:
Methodology and Experience. \textit{IEEE/ACM Trans. Netw.} \textbf{9} 265--280.
%
\item Fortz, B., M. Thorup. 2000. Internet Traffic Engineering by Optimizing OSPF Weights. \textit{Proc. of 19th IEEE Conference on Computer
Communications} 519--528.
%
\item Holmberg, K., D. Yuan. 2004. Optimization of Internet Protocol Network Design and Routing. \textit{Networks} \textbf{43} 39--53.
%
\item Lin, F. Y. S., J. L. Wang. 1993. Minimax Open Shortest Path First Routing Algorithms in Networks Supporting the SMDS Services.
\textit{Proc. of IEEE International Conference on Communications}
666--670.
%
\item Moy, J. 1998. \textit{OSPF Anatomy of an Internet Routing Protocol}. Addison-Wesley.
%
\item Ramakrishnan, K. G., M. A. Rodrigues. 2001. Optimal Routing in Shortest-Path Data Network. \textit{Bell Labs Technical Journal}
\textbf{6} 117--138.
%
\item Zhang, C., R. Rodo\v sek. 2005. Modelling and Constraint Hardness Characterisation of the Unique-Path OSPF Weight Setting Problem.
V.S. Sunderam et al., Eds. \textit{ICCS 2005, LNCS 3514}. Springer-Verlag, Berlin Heidelberg. 804--811.

\end{hangref}


\end{document}